\newcommand\sss{\scriptstyle}
\newcommand{\nc}{\newcommand}
\nc{\la}{\lambda}
\nc{\ep}{\epsilon}
\nc{\C}{\mathbb{C}}
\nc{\Z}{\mathbb{Z}}
\nc{\content}{\operatorname{content}}
\nc{\Span}{\operatorname{Span}}
\newcommand\hp{{\tikz[baseline=0,scale=0.3]{\halfuptri{}{}}}}
\nc{\Id}{\operatorname{Id}}
\nc{\fug}{\mathit{fug}}
\nc{\wt}{\widetilde}
\tikzset{mynode/.style={circle,draw=black,fill=black,inner sep=1.8pt,outer sep=0pt}}
\tikzset{edgelabel/.style={\mcol,inner sep=0pt}}
\tikzset{invlabel/.style={draw=black,text=black,circle,inner sep=0pt,minimum size=3mm}}
\newcommand\tikzif[2][]{
	\tikzifinpicture{#2}{\begin{tikzpicture}[#1]#2\end{tikzpicture}}
}
\tikzset{math mode/.style = {execute at begin node=$, execute at end node=$}}
\def\dr{red!80!black} 
\def\dg{green!80!black} 
\def\db{blue!80!black} 
\tikzset{d/.style={ultra thick}}
\tikzset{dr/.style={draw=\dr,d}}
\tikzset{dg/.style={draw=\dg,d}}
\tikzset{db/.style={draw=\db,d}} 
\def\mcol{black}
\def\m#1{{\color{\mcol}#1}}
\tikzset{rt/.style={text=blue,execute at begin node=$\sf,execute at end node=$}}
\tikzset{arrow/.style={postaction={decorate,thick,decoration={markings,mark = at position #1 with {\arrow{>}}}}},arrow/.default=0.5}
\tikzset{invarrow/.style={postaction={decorate,thick,decoration={markings,mark = at position #1 with {\arrow{<}}}}},invarrow/.default=0.5}
\long\def\remint#1#2{%
	\tikz[baseline=-4pt]%
	{\node[outer sep=0pt,draw=black,fill=cyan!40!green!50!white,rectangle,rounded corners,align=left,text width=#1]{#2}}%
}
\newbox{\rembox}
\long\def\icom#1{\noindent\nobreak\hfil\penalty1000\hfilneg
	\sbox{\rembox}{#1}%
	\ifdim\wd\rembox>\textwidth\remint{\textwidth}{#1}\else\remint{}{#1}\fi%
}
\newlength\myshift
\newcommand\getshift{%
	\pgfmathsetlength{\myshift}{0.8mm}
}
\def\noparenx#1{%
	\ifx\relax#1
	\else
	\if)#1%
	\else
	\if(#1%
	\else
	#1%
	\fi
	\fi
	\expandafter\noparenx
	\fi
}
\def\noparen#1{\noparenx #1\relax}
\newcommand\rh[5][]{
	\tikzif[baseline=0,scale=1.25]{
		\getshift
		\draw[thick,black] (0,0) coordinate (ad) -- node[edgelabel,left,xshift=\myshift] {$#2$} ++(-60:1) coordinate (ba) -- node[edgelabel,right,xshift=-\myshift] {$#3$} ++(60:1) coordinate (cb) -- node[edgelabel,right,xshift=-\myshift] {$#4$} ++(120:1) coordinate (dc) -- node[edgelabel,left,xshift=\myshift] {$#5$} cycle;
		\ifx\&#1\&\else\node[invlabel] at (0.5,0) {$\sss#1$};\fi
}}
\newcommand\uptri[4][]{
	\tikzif[baseline=0.34cm,scale=1.25]{
		\getshift
		\draw[thick,black] (-0.5,0) -- node[edgelabel] (horiz) {$\vphantom{\noparen{#3}}\smash{#3}$} ++(0:1) -- node[edgelabel,right,xshift=-\myshift] (NE) {$#4$} ++(120:1) -- node[edgelabel,left,xshift=\myshift] (NW) {$#2$} ++(240:1) -- cycle; 
		\ifx\&#1\&\else\node[invlabel] at (0,0.33) {$\sss#1$};\fi
}}
\newcommand\downtri[4][]{
	\tikzif[baseline=-0.54cm,scale=1.25]{
		\getshift
		\begin{scope}[scale=-1]
			\draw[thick,black] (-0.5,0) -- node[edgelabel] (horiz) {$\vphantom{\noparen{#3}}\smash{#3}$} ++(0:1) -- node[edgelabel,left,xshift=\myshift] (NE) {$#4$} ++(120:1) -- node[edgelabel,right,xshift=-\myshift] (NW) {$#2$} ++(240:1) -- cycle; 
			\ifx\&#1\&\else\node[invlabel] at (0,0.33) {$\sss#1$};\fi
		\end{scope}
}}
\newcommand\halfuptri[3][]{
	\tikzif[baseline=0.34cm,scale=1.25]{
		\getshift
		\draw[thick,black] (-0.5,0) ++(60:1) coordinate (A) -- node[edgelabel,left,xshift=\myshift] (NW) {$#2$} ++(240:1) -- node[edgelabel] (horiz) {$\vphantom{\noparen{#3}}\smash{#3}$} ++(0:0.5) coordinate (B); \draw[thick,black,dash pattern=on 1.5pt off 1.5pt] (A) -- (B);
		\ifx\&#1\&\else\node[invlabel] at (-0.15,0.25) {$\sss#1$};\fi
}}
\theoremstyle{plain}
\newtheorem{thm}{Theorem}
\newtheorem{lem}[thm]{Lemma}
\newtheorem{prop}[thm]{Proposition}
\newtheorem{defn}[thm]{Definition}
\newtheorem{rem}[thm]{Remark}
\newtheorem{eg}[thm]{Example}
\newenvironment{customthm}[1]
{\innercustomthm}
{\endinnercustomthm}
\newcommand\dom\backslash
\title[Restricting Schubert classes to symplectic Grassmannians using self-dual puzzles]
{Restricting Schubert classes to \\
	symplectic Grassmannians using self-dual puzzles}
\title[Restricting Schubert classes to symplectic Grassmannians using puzzles]
{Restricting Schubert classes to \\
	symplectic Grassmannians using self-dual puzzles}
\author{Iva Halacheva}
\author{Allen Knutson}
\author{Paul Zinn-Justin}
\address{Iva Halacheva, Paul Zinn-Justin, School of Mathematics and Statistics, The University of Melbourne, 
	Victoria 3010, Australia}
\email{iva.halacheva@unimelb.edu.au}
\email{pzinn@unimelb.edu.au}
\address{Allen Knutson, Cornell University, Ithaca, New York}
\email{allenk@math.cornell.edu}
\thanks{PZJ was supported by ARC grant FT150100232.
}
\date{\today}
\begin{document}
\begin{abstract}
	Given a Schubert class on $Gr(k,V)$ where $V$ is a
	symplectic vector space of dimension $2n$, we consider its restriction to the
	symplectic Grassmannian $SpGr(k,V)$ of isotropic subspaces.
	Pragacz gave tableau formul\ae\ for positively computing the expansion
	of these $H^*(Gr(k,V))$ classes into Schubert classes of the target
	when $k=n$, which corresponds to expanding Schur polynomials into
	$Q$-Schur polynomials.
	Co\c skun described an algorithm for their expansion when $k\leq n$.
	We give a puzzle-based formula for these expansions, while
	extending them to equivariant cohomology. We make use of a new
	observation that usual Grassmannian puzzle pieces are already enough
	to do some $2$-step Schubert calculus, and apply techniques from quantum integrable systems (``scattering diagrams'').
\end{abstract}
	
	\maketitle
	
	\newcommand\junk[1]{}

	
	\section{Introduction}
	
	\newcommand\bfdefn[1]{{\bf #1}}
	
	\newcommand\iso\cong
	\newcommand\into\hookrightarrow
	\newcommand\onto\twoheadrightarrow
	
	\subsection{Grassmannian duality of puzzles}\label{ssec:duality}
	
	The Littlewood--Richardson coefficients $c_{\lambda\mu}^\nu$,
	where $\lambda,\mu,\nu$ are (for now) partitions, satisfy
	a number of symmetries, one of which is 
	$c_{\lambda\mu}^\nu = c_{\mu^T \lambda^T}^{\nu^T} $.
	One origin of L-R coefficients is as structure constants in
	the product in $H^*(Gr(k,V))$ of Schubert classes on the Grassmannian
	of $k$-planes in $V$.
	In that formulation, the \bfdefn{Grassmannian duality} homeomorphism 
	$Gr(k,V) \iso Gr((\dim V)-k,V^*)$, $(U \leq V) \mapsto (U^\perp \leq V^*)$ 
	induces an isomorphism of cohomology rings
	and a correspondence of Schubert bases, giving the symmetry above.
	This symmetry is not at all manifest in tableau-based computations
	of the $\{c_{\lambda\mu}^\nu\}$, but it is in the ``puzzle'' rule
	of \cite{KT03}, which replaces partitions by binary strings and is based on the puzzle pieces
	$$
	\uptri{0}{0}{0}\quad \uptri{1}{1}{1}\quad \uptri{1}{10}{0}\ 
	\text{, their rotations, and the equivariant piece }  \rh{1}{0}{1}{0}; 
	$$
	we recall and generalize this rule in Theorem \ref{thm:jk} below.
	Specifically, the \bfdefn{dual} of a puzzle is made 
	by flipping
	it left-right while 
	exchanging all $0 \leftrightarrow 1$
	(in particular, $10$-labels again become $10$s). The duals of the
	puzzles counted by $c_{\lambda\mu}^\nu$ are exactly those counted by
	$c_{\mu^T\lambda^T}^{\nu^T}$.
	
	This prompts the question: what do {\em self-dual} puzzles count?  
	One might expect it is something related to an isomorphism $V\iso V^*$
	i.e. a bilinear form, and indeed our main theorems \ref{thm:n2n},
	\ref{thm:generalk}, and \ref{thm:main} interpret self-dual puzzles as computing the
	restrictions of Schubert classes on $Gr(k,2n)$ to the {\em symplectic Grassmannian} $SpGr(k,2n)$.
	(We will address elsewhere the minimal modifications necessary to handle
	the orthogonal case.)
	For $k=n$, there was already a tableau-based formula for these
	restrictions\footnote{In particular, \cite{Pragacz} provides a cohomological 
		interpretation of algebraic results of Stembridge \cite{Stembridge}
		about expanding Schur functions into 
		Schur $P$- and $Q$-functions.}
	in \cite{Pragacz} which is less simple to state than 
	Theorem \ref{thm:n2n}; see also \cite{CoskunSp}. 
	This is perhaps another effect of tableaux
	being less suited to Grassmannian duality than puzzles are.  
	
	\subsection{Restriction from $Gr(n,2n)$}
	
	
	Let $V$ be a vector space over $\C$ equipped with a symplectic form,
	so the Grassmannian $Gr(k,V)$ of $k$-planes contains the subscheme
	$$ SpGr(k,V) := \{L \leq V\ :\ \dim L = k,\ L \leq L^\perp \} $$
	where $\perp$ means perpendicular with respect to the symplectic form. 
	Then the inclusion $\iota:\ SpGr(k,V) \into Gr(k,V)$ induces a pullback 
	$ \iota^*:\ H^*(Gr(k,V)) \to H^*(SpGr(k,V)) $
	in cohomology. As both cohomology rings possess bases consisting of
	{\em Schubert classes} $\{S_\lambda\}$, one can ask about expanding 
	$\iota^*(S_\lambda)$ in the basis of $SpGr(k,V)$'s Schubert classes $\{S_\nu\}$.

	Let $\dim V = 2n$ (necessarily even, since $V$ is symplectic),
	and for the simplest version of the theorem assume $k=n$. 
	Then the Schubert classes on $Gr(n,V)$ are indexed by the
	$2n\choose n$ binary strings with $n$ $0$s and $n$ $1$s, whereas the
	Schubert classes on $SpGr(n,V)$ are indexed by the $2^n$ binary strings 
	of length $n$ (with more detail on this indexing in \S \ref{sec:setup}).

	\begin{customthm}{1A}\label{thm:n2n}
		Let $S_\lambda$ be a Schubert class on $Gr(n,2n)$, indexed by a string 
		$\lambda$ with content in $0^n 1^n$, and $S_\nu$ a Schubert class on 
		$SpGr(n,2n)$, indexed by a length $n$ binary string. Then the coefficient
		of $S_\nu$ in $\iota^*(S_\lambda)$ is the number of self-dual puzzles
		with $\lambda$ on the Northwest side, $\nu$ on the left half of the 
		South side (both $\lambda$ and $\nu$ read left to right), and
		equivariant pieces only allowed along the axis of reflection.
	\end{customthm}

	\parpic[r][b]{%
		\begin{minipage}[b]{53mm}\vspace{-0.3cm}
			\[\begin{tikzpicture}[math mode,nodes={\mcol},x={(-0.577cm,-1cm)},y={(0.577cm,-1cm)},scale=1]
			\draw[thick] (0,0)
			-- node {\sss 0} ++(0,1); \draw[thick] (0,0)
			-- node {\sss 1} ++(1,0); \draw[thick] (0+1,0)
			-- node {\sss 10} ++(-1,1); 
			\draw[thick] (0,1)
			-- node {\sss 1} ++(0,1); \draw[thick] (0,1)
			-- node {\sss 1} ++(1,0); \draw[thick] (0+1,1)
			-- node {\sss 1} ++(-1,1); 
			\draw[thick] (0,2)
			-- node {\sss 0} ++(0,1); \draw[thick] (0,2)
			-- node {\sss 1} ++(1,0); \draw[thick] (0+1,2)
			-- node {\sss 10} ++(-1,1); 
			\draw[thick] (0,3)
			-- node {\sss 1} ++(0,1); \draw[thick] (0,3)
			-- node {\sss 1} ++(1,0); \draw[thick] (0+1,3)
			-- node {\sss 1} ++(-1,1); 
			\draw[thick] (1,0)
			-- node {\sss 0} ++(0,1); \draw[thick] (1,0)
			-- node {\sss 0} ++(1,0); \draw[thick] (1+1,0)
			-- node {\sss 0} ++(-1,1); 
			\draw[thick] (1,1)
			-- node {\sss 1} ++(0,1); \draw[thick] (1,1)
			-- node {\sss 0} ++(1,0); \draw[thick] (1+1,1);
			\draw[thick] (1,2)
			-- node {\sss 0} ++(0,1); \draw[thick] (1,2)
			-- node {\sss 0} ++(1,0); \draw[thick] (1+1,2)
			-- node {\sss 0} ++(-1,1); 
			\draw[thick] (2,0)
			-- node {\sss 0} ++(0,1); \draw[thick] (2,0)
			-- node {\sss 1} ++(1,0); \draw[thick] (2+1,0)
			-- node {\sss 10} ++(-1,1); 
			\draw[thick] (2,1)
			-- node {\sss 1} ++(0,1); \draw[thick] (2,1)
			-- node {\sss 1} ++(1,0); \draw[thick] (2+1,1)
			-- node {\sss 1} ++(-1,1); 
			\draw[thick] (3,0)
			-- node {\sss 0} ++(0,1); \draw[thick] (3,0)
			-- node {\sss 0} ++(1,0); \draw[thick] (3+1,0)
			-- node {\sss 0} ++(-1,1); 
			\end{tikzpicture}\]
		\end{minipage}
	}
	\begin{eg} 
		For $\la=0101$, a self-dual puzzle with $\la$ on the Northwest
		side has to be of the form \scalebox{0.7}{$\uptri{\la}{\mu}{\la}$}
		for some $\mu$.
		
		So, it will appear in the usual calculation of
		$S_{0101}^2 \in H^*_T(Gr(2,4))$, which involves three
		puzzles. Only one of these puzzles is self-dual, and its only
		equivariant piece is on the centerline. From this we compute
		$\iota^*(S_{0101}) = S_{01}$ in $H^*(SpGr(2,4))$.
	\end{eg}
	
	\vspace{0.2cm} 
	
	A surprising aspect of Theorem \ref{thm:n2n} is that equivariant pieces appear in this nonequivariant calculation, albeit only down the centerline.\footnote{The number of equivariant pieces down the centerline is in fact fixed and equal
		to the number of $1$s in $\nu$, by a weight conservation argument.} 
	If we allow them elsewhere (self-dually occurring in pairs), 
	the puzzles compute the generalization of Theorem \ref{thm:n2n} to the map 
	$\iota^*:\ H^*_T(Gr(n,2n)) \to H^*_T(SpGr(n,2n))$ 
	in {\em (torus-)equivariant} cohomology, whose coefficients now live in the
	polynomial ring $H^*_T(pt) \cong {\mathbb Z}[y_1,\ldots,y_n]$.
	We leave this statement until Theorem \ref{thm:main} in \S \ref{sec:proof} because it requires some
	precision about the locations of the symplectic Schubert varieties.
	
	\subsection{Interlude: puzzles with $10$s on the South side}
	\label{ssec:south10s}
	
	To generalize Theorem \ref{thm:n2n} to $SpGr(k,2n)$, not just $k=n$,
	we need strings that index its 
	${n \choose k} 2^k$ many Schubert classes. We do this using the third 
	edge label, $10$: consider strings $\nu$ of length $n$ with $(n-k)$ $10$s,
	the rest a mix of $1$s and $0$s. 
	
	Before considering {\em self-dual} puzzles with Southside $10$s, we
	mention a heretofore unobserved capacity of the puzzle pieces from
	\cite{KT03}, available once we allow for Southside $10$s.
	It turns out they are already
	sufficient to compute certain products\footnote{Note that the general
		$2$-step problem has received a puzzle formula \cite{BKPT}, but
		using many more puzzle pieces than we use here. The problem of
		multiplying classes from different Grassmannians was studied already
		in \cite{PurbhooSottile}.}  in the $T$-equivariant cohomology of
	$2$-step flag manifolds! The only necessary new idea is to allow the
	previously internal label $10$ to appear on the South side.
	
	\begin{thm}\label{thm:jk}
		Let $0 \leq j \leq k \leq n$, and let $\lambda$, $\mu$ be $0,1$-strings
		with content $0^j 1^{n-j}$, $0^k 1^{n-k}$ respectively, 
		defining equivariant Schubert classes $S_\lambda,S_\mu$ on $Gr(j,\C^n)$, $Gr(k,\C^n)$ respectively.
		Let $\pi_j,\pi_k$ be the respective projections of the $2$-step flag manifold 
		$Fl(j,k;\ \C^n)$ to those Grassmannians. 
		Let $\nu$ be a string in the ordered alphabet $0, 10, 1$ with content
		$0^j (10)^{k-j} 1^{n-k}$, defining a Schubert class $S_\nu$ in
		$H^*_T(Fl(j,k;\ \C^n))$. 
		We emphasize that the alphabet order is $0$, $10$, $1$!
		
		Then as in \cite{KT03}, the coefficient of $S_\nu$ in the product
		$\pi_i^*(S_\lambda) \pi_j^*(S_\mu) \in H^*_T(Fl(j,k;\ \C^n))$
		is the sum over puzzles $P$ with boundary labels
		$\lambda$, $\mu$, $\nu$, made from the puzzle pieces in \S \ref{ssec:duality},
		of the ``fugacities'' 
		$\fug(P) := \prod_{\text{equivariant pieces $\diamondsuit$ in $P$}}
		(y_{\text{NE--SW diagonal of }\diamondsuit} - y_{\text{NW--SE diagonal of }\diamondsuit})$.
	\end{thm}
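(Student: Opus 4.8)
The plan is to prove the identity by realizing both sides as outputs of the same integrable lattice model and then matching them, using throughout the reformulation of the \cite{KT03} rule in which puzzle pieces are Boltzmann weights. First I would encode the three edge labels $0,10,1$ as the states of one vector space $W$ and read each legal piece --- including the equivariant rhombus, whose weight is the fugacity factor $y_{\diamondsuit} $ appearing in $\fug(P)$ --- as a nonzero entry of an $R$-matrix $R(y_a-y_b)$ acting on $W\otimes W$. The content is then that the puzzle partition function with fixed boundaries $\lambda$ (NW), $\mu$ (NE), $\nu$ (South) is a matrix element of a product of such $R$-matrices; the only departure from the Grassmannian setup of \cite{KT03} is that the South boundary may now carry the composite label $10$. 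Since the $R$-matrices are literally unchanged, the Yang--Baxter equation, and the freedom to slide rapidity lines past one another that organizes the whole computation into a scattering diagram, are inherited verbatim.

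Next I would pin down the geometric meaning of the two input strings. Writing each label as a pair of bits via $0\mapsto(0,0)$, $10\mapsto(1,0)$, $1\mapsto(1,1)$ --- where the first bit records membership relative to the $j$-plane and the second relative to the $k$-plane, the excluded pair $(0,1)$ reproducing exactly the three allowed labels --- one sees that $\lambda$ depends only on the first bits (with content $0^j1^{n-j}$) and $\mu$ only on the second (with content $0^k1^{n-k}$). Thus $\lambda$ is exactly the index, via $\pi_j$, of a class pulled back from $Gr(j,\C^n)$ and $\mu$ the index, via $\pi_k$, of one pulled back from $Gr(k,\C^n)$, and the two enter the model on its two distinct incoming sides. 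The commuting-transfer-matrix structure, together with the \cite{KT03} identification of these $R$-matrix products with equivariant Schubert multiplication and the matching of $y_1,\dots,y_n$ with the torus weights, then forces the computed matrix element to be the coefficient of $S_\nu$ in $\pi_j^*(S_\lambda)\,\pi_k^*(S_\mu)$.

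The heart of the argument, and where I expect the real work to lie, is to show that these plain $\{0,10,1\}$ pieces suffice, i.e.\ that the genuinely two-step pieces of \cite{BKPT} never contribute. A general product of two Schubert classes on $Fl(j,k;\C^n)$ mixes the two bits, whereas here both factors are pullbacks, so the two species of particles (the $1$s of $\lambda$ and the $1$s of $\mu$) interact only through the single bound state $10$; concretely the three-state ($\mathrm{GL}_3$-type) scattering degenerates to a two-state ($\mathrm{GL}_2$-type) one in which $10$ behaves as a $0$-particle bound below a $1$-particle. I would make this precise by tracking the $10$-labels, which in any such puzzle organize into non-crossing domain walls; since $\lambda$ and $\mu$ carry no $10$s, the only admissible endpoints of these walls lie on the South side, matching exactly the $k-j$ prescribed $10$-positions of $\nu$, and I would check that this configuration is incompatible with the occurrence of any extra two-step piece. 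The main obstacle is thus not the Yang--Baxter bookkeeping but establishing this decoupling rigorously, i.e.\ that the cohomology class produced by the two-state model is genuinely the product of the two pullbacks and not something requiring a richer piece set.

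Finally I would secure the base case and the equivariant accounting: verify the rule on a minimal configuration (for instance $j=k$, where the statement collapses to the \cite{KT03} theorem itself, or on monochromatic boundaries), confirm that each equivariant rhombus contributes precisely $y_{\mathrm{NE\text{--}SW}}-y_{\mathrm{NW\text{--}SE}}$ in agreement with the localization weights on $Fl(j,k;\C^n)$, and invoke uniqueness of the expansion in the Schubert basis $\{S_\nu\}$ to conclude that the puzzle sum computes the claimed structure constants.
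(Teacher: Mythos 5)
The paper does not actually prove Theorem~\ref{thm:jk}: in \S\ref{sec:proof} it states only that ``the proof of Theorem \ref{thm:jk} is very much as in \cite[\S 3]{KZJ} and will appear elsewhere.'' So the only comparison available is with the method the paper indicates, namely the integrable-systems argument of \cite{KZJ}, which is also the method used for the theorem the paper \emph{does} prove (Theorem \ref{thm:main}). Your overall framework --- encoding $\{0,10,1\}$ as basis states, reading the puzzle pieces as entries of the $R$- and $U$-matrices of Definition~\ref{def:RKU}, and treating $10$ as a bound state so that the two-step problem for pullback classes degenerates to the same piece set as the Grassmannian case --- is the right one and matches the paper's setup in \S\ref{sec:TC}.

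The genuine gap is in what you call the heart of the argument. You propose to prove sufficiency of the simple pieces by tracking $10$-labels as domain walls and ``checking that this configuration is incompatible with the occurrence of any extra two-step piece,'' and then to conclude via ``the commuting-transfer-matrix structure \dots forces the computed matrix element to be the coefficient of $S_\nu$.'' Neither step is an argument. There is no need to show that the \cite{BKPT} pieces ``never contribute'' --- they are simply not part of the rule being proved --- and the combinatorial claim that $10$-walls can only terminate on the South side does not by itself identify the partition function with a cohomological structure constant. The argument that actually closes (and that \S\ref{sec:proof} carries out for Theorem~\ref{thm:main}) is fixed-point localization: one shows, for each $T$-fixed point $\sigma$ of $Fl(j,k;\C^n)$, that the puzzle partition function with South boundary summed against $S_\nu|_\sigma$ equals $S_\lambda|_{\pi_j(\sigma)}\,S_\mu|_{\pi_k(\sigma)}$, by writing both sides as scattering diagrams (the AJS/Billey wiring diagrams of Proposition~\ref{prop:ajsbwiring}) and deforming one into the other using the Yang--Baxter equation \eqref{eqn:YBRG} and the trivalent-vertex swap \eqref{eqn:triv}; injectivity of restriction to fixed points then gives the theorem. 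Your final paragraph inverts this logic --- you cannot ``invoke uniqueness of the expansion in the Schubert basis'' until you have independently identified what class the puzzle sum expands, which is precisely the missing localization computation. Checking the base case $j=k$ and the weight of the equivariant rhombus is fine but does not substitute for it.
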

	
	
	\parpic[r][b]{%
		\begin{minipage}[b]{65mm}\vspace{0.2cm}
			$ 
			\begin{matrix}
			\ 
			\begin{tikzpicture}[math mode,nodes={\mcol},x={(-0.577cm,-1cm)},y={(0.577cm,-1cm)},scale=.7]
			\draw[thick] (0,0)
			-- node[pos=0.5] {\sss 1} ++(0,1); \draw[thick] (0,0)
			-- node[pos=0.5] {\sss 1} ++(1,0); \draw[thick] (0+1,0)
			-- node          {\sss 1} ++(-1,1); 
			\draw[thick] (0,1)
			-- node[pos=0.5] {\sss 0} ++(0,1); \draw[thick] (0,1)
			-- node[pos=0.5] {\sss 1} ++(1,0); \draw[thick] (0+1,1)
			-- node          {\sss 10} ++(-1,1); 
			\draw[thick] (0,2)
			-- node[pos=0.5] {\sss 0} ++(0,1); \draw[thick] (0,2)
			-- node[pos=0.5] {\sss 1} ++(1,0); \draw[thick] (0+1,2)
			-- node          {\sss 10} ++(-1,1); 
			\draw[thick] (1,0)
			-- node[pos=0.5] {\sss 1} ++(0,1); \draw[thick] (1,0)
			-- node[pos=0.5] {\sss 0} ++(1,0); \draw[thick] (1+1,0)
			; 
			\draw[thick] (1,1)
			-- node[pos=0.5] {\sss 0} ++(0,1); \draw[thick] (1,1)
			-- node[pos=0.5] {\sss 0} ++(1,0); \draw[thick] (1+1,1)
			-- node          {\sss 0} ++(-1,1); 
			\draw[thick] (2,0)
			-- node[pos=0.5] {\sss 1} ++(0,1); \draw[thick] (2,0)
			-- node[pos=0.5] {\sss 1} ++(1,0); \draw[thick] (2+1,0)
			-- node          {\sss 1} ++(-1,1); 
			\end{tikzpicture}
			& \quad &
			\begin{tikzpicture}[math mode,nodes={\mcol},x={(-0.577cm,-1cm)},y={(0.577cm,-1cm)},scale=.7]
			\draw[thick] (0,0)
			-- node[pos=0.5] {\sss 1} ++(0,1); \draw[thick] (0,0)
			-- node[pos=0.5] {\sss 1} ++(1,0); \draw[thick] (0+1,0)
			-- node          {\sss 1} ++(-1,1); 
			\draw[thick] (0,1)
			-- node[pos=0.5] {\sss 0} ++(0,1); \draw[thick] (0,1)
			-- node[pos=0.5] {\sss 0} ++(1,0); \draw[thick] (0+1,1)
			-- node          {\sss 0} ++(-1,1); 
			\draw[thick] (0,2)
			-- node[pos=0.5] {\sss 0} ++(0,1); \draw[thick] (0,2)
			-- node[pos=0.5] {\sss 0} ++(1,0); \draw[thick] (0+1,2)
			-- node          {\sss 0} ++(-1,1); 
			\draw[thick] (1,0)
			-- node[pos=0.5] {\sss 10} ++(0,1); \draw[thick] (1,0)
			-- node[pos=0.5] {\sss 0} ++(1,0); \draw[thick] (1+1,0)
			-- node          {\sss 1} ++(-1,1); 
			\draw[thick] (1,1)
			-- node[pos=0.5] {\sss 0} ++(0,1); \draw[thick] (1,1)
			-- node[pos=0.5] {\sss 1} ++(1,0); \draw[thick] (1+1,1)
			-- node          {\sss 10} ++(-1,1); 
			\draw[thick] (2,0)
			-- node[pos=0.5] {\sss 1} ++(0,1); \draw[thick] (2,0)
			-- node[pos=0.5] {\sss 1} ++(1,0); \draw[thick] (2+1,0)
			-- node          {\sss 1} ++(-1,1); 
			\end{tikzpicture}
			\end{matrix}
			$
		\end{minipage}
	}

	\begin{eg} 
		If $\lambda = 101$, $\mu = 100$, then their pullbacks give
		$\pi_1^*(S_{101}) = S_{10,0,1}$, $\pi_2^*(S_{100}) = S_{1,0,10}$, with product
		$(y_1-y_2)S_{1,0,10} + S_{1,10,0}$ (note: to compare strings to
		permutations requires inversion, as in \S \ref{sec:AJSBilley}).
	\end{eg}

	\subsection{Restriction from $Gr(k,2n)$, $k<n$}

	\begin{customthm}{1B}\label{thm:generalk}
		Let $\lambda$ be a string with content $0^k 1^{2n-k}$, whereas $\nu$ is 
		of length $n$ with $(n-k)$ $10$s, the rest a mix of $1$s and $0$s. 
		Consider the puzzles from Theorem \ref{thm:jk}, where 
		we allow $10$ labels to appear on the South side.
		
		Then as before, in $H^{\ast}(SpGr(k,2n))$, the coefficient of $S_\nu$ in $\iota^*(S_\lambda)$
		is the number of self-dual puzzles with $\lambda$ on the Northwest side, 
		$\nu$ on the left half of the South side (both $\lambda$ and $\nu$
		read left to right), and equivariant pieces only allowed along the
		axis of reflection.  
	\end{customthm}
	
	\newcommand\fulltri[5]{
		\draw[thick] (#1,#2) 	-- node {\sss #3} ++(1,0); 
		\draw[thick] (#1,#2) 	-- node {\sss #4} ++(0,1); 
		\draw[thick] (#1+1,#2) 	-- node {\sss #5} ++(-1,1); 
	}
	\newcommand\andNE[6]{
		\fulltri{#1}{#2}{#3}{#4}{#5}
		\draw[thick] (#1,#2) 	-- node {\sss #6} ++(-1,0); 
	}
	\newcommand\eqvttop[2]{
		\draw[thick] (#1,#2) 	-- node {\sss 0} ++(1,0); 
		\draw[thick] (#1,#2) 	-- node {\sss 1} ++(0,1); 
	}

	\parpic[r][b]{%
		\begin{minipage}[b]{75mm}\vspace{-0.2cm}
			$
			\begin{tikzpicture} 
			[math mode,nodes={\mcol},x={(-0.577cm,-1cm)},y={(0.577cm,-1cm)},scale=.6]
			\andNE   1 0 0 0 0 1
			\fulltri 2 0 1 0 {10}
			\fulltri 3 0 0 0 0
			\andNE   2 1 1 1 1 0
			\fulltri 4 0 1 0 {10}
			\eqvttop 3 1
			\fulltri 5 0 1 0 {10}
			\fulltri 4 1 1 1 1
			\andNE   3 2 0 0 0 1
			\draw[dashed,d] (0,0) -- (3,3);
			\end{tikzpicture}
			\quad
			\begin{tikzpicture} 
			[math mode,nodes={\mcol},x={(-0.577cm,-1cm)},y={(0.577cm,-1cm)},scale=.6]
			\andNE   1 0 0 0 0 1
			\fulltri 2 0 1 0 {10}
			\fulltri 3 0 0 0 0
			\andNE   2 1 1 1 1 0
			\fulltri 4 0 1 0 {10}
			\fulltri 5 0 1 0 {10}
			\fulltri 4 1 1 1 1
			\fulltri 3 1 0 {10} 1
			\andNE   3 2 1 1 1 0
			\draw[dashed,d] (0,0) -- (3,3);
			\end{tikzpicture}
			\quad
			\begin{tikzpicture} 
			[math mode,nodes={\mcol},x={(-0.577cm,-1cm)},y={(0.577cm,-1cm)},scale=.6]
			\andNE   1 0 0 0 0 1
			\fulltri 2 0 1 0 {10}
			\fulltri 3 0 0 0 0
			\andNE   2 1 1 1 1 0
			\fulltri 4 0 1 1 1
			\fulltri 5 0 1 1 1
			\fulltri 3 1 {10} 1 0
			\fulltri 4 1 1 0 {10}
			\andNE   3 2 0 0 0 1
			\draw[dashed,very thin,d] (0,0) -- (3,3);
			\end{tikzpicture}
			$
		\end{minipage}
	}
	\begin{eg}\vspace{0.2cm}
		In the remainder of the paper we work with the left halves $\hp$
		of self-dual puzzles, since the centerline and right half can be inferred.
		The half-puzzles pictured here (really for equivariant Theorem
		\ref{thm:main} to come) show
		$\iota^*(S_{110101}) = (y_2-y_3)S_{10,1,0} + S_{10,1,1} + S_{1,10,0}$.\vspace{0.3cm}
	\end{eg}
	
	\vspace{-0.1cm}
	
	\junk{
		In this extended abstract we only include the proof of Theorem
		\ref{thm:generalk} (containing Theorem \ref{thm:n2n})
		in ordinary and equivariant cohomology (see Theorem \ref{thm:main}); the $K$-version will appear elsewhere.
	}

	The proof is based on the ``quantum integrability'' of $R$-matrices, 
	and closely follows that of \cite{KZJ} (see also \cite{ZJ}); in particular, following the quantum integrable literature,
	we use graph-dual pictures (scattering diagrams) which are more amenable 
	than puzzles to topological manipulations.
	The principal new feature is
	the appearance of $K$-matrices. The ``reflection equation''
	$RKRK = KRKR$ (more precisely, eq. (\ref{eqn:reflection})
	in Lemma \ref{lem:TCP}) is standard; however since the approach
	of \cite{KZJ} requires not just $R$-matrices but the trivalent $U$-matrix, 
	we need here the possibly novel ``$K$-fusion equation'' (\ref{eqn:fusion})
	in Lemma \ref{lem:TCP}.

	\section{The groups, flag manifolds, and
		cohomology rings}\label{sec:setup}
	
	\junk{
		While all symplectic structures on $\C^{2n}$ are
		$GL_{2n}$-equivalent, in order to compute in $T^n$-equivariant
		cohomology we need to be more specific about our choice of form: }
	We take the Gram matrix of our symplectic form
	to be antidiagonal; this is so that if $B_\pm$
	are the upper/lower triangular Borel subgroups of $GL_{2n}$, then
	$B_\pm \cap Sp_{2n}$ will be opposed Borel subgroups of $Sp_{2n}$.
	
	Consider
	$Gr(k,2n) = \{0 \leq V \leq \C^{2n} \; | \; \dim{V}=k\} \iso GL_{2n}/P$
	where $k\leq n$ and $P$ is the parabolic subgroup of block type
	$(k,2n-k)$ containing $B = B_+$.
	Then $ P_{Sp_{2n}}=P \cap Sp_{2n}$ is a parabolic for the
	Lie subgroup $Sp_{2n}$ and the symplectic Grassmannian is
	$SpGr(k,2n)=\{0 \leq V < \C^{2n} \; | \; \dim{V}=k,
	V\leq V^{\perp}\} \iso Sp_{2n}/P_{ Sp_{2n}}$. 
	Let $T^{2n} := B_+ \cap B_-$ be the diagonal matrices in $GL_{2n}$, 
	and $T^{n} := Sp_{2n} \cap T^{2n}$.
	Note that $(GL_{2n}/P)^{T^n} = (GL_{2n}/P)^{T^{2n}}$ since there exist
	$x \in T^n$ with no repeated eigenvalues. The following diagram of spaces commutes.
	
	$$
	\begin{tikzcd}
	\{ \nu\ :\ \exists \; a \text{ s.t. } \content(\nu) = 0^a (10)^{n-k} 1^{k-a} \}
	\arrow[hookrightarrow]{d}{\wt{{\iota}}}
	\arrow["\sim"]{r} & 
	(SpGr(k,2n))^{T^n} \arrow[hookrightarrow]{r}
	\arrow[hookrightarrow]{d} & 
	SpGr(k,2n) 
	\arrow[hookrightarrow]{d}{\iota}  
	\\   
	\{ \lambda\ :\ \content(\lambda) = 0^k 1^{2n-k} \}
	\arrow["coord"]{r} & 
	(Gr(k,2n))^{T^n} 
	\arrow[hookrightarrow]{r}
	&
	Gr(k,2n)  
	\end{tikzcd}
	$$
	The map $\wt{{\iota}}$ takes a sequence $\nu$ first to its double
	$\nu \overline{\nu}$ where $\overline{\nu}$ is $\nu$ reflected and its $0$s
	and $1$s are switched; 
	after that, all $10$s in $\nu\overline{\nu}$ are turned into $1$s, e.g.
	$$ 0,10,1,0,10 \quad\mapsto\quad 
	0,10,1,0,10,\, 10,1,0,10,1 \quad\mapsto\quad 0,1,1,0,1,\, 1,1,0,1,1 $$
	The bijective map $coord$ takes a $0,1$-sequence $\lambda$ to the coordinate 
	$k$-plane that uses the coordinates in the $0$ positions of $\lambda$
	(so, $1,4,8$ in the above example).
	Note that $coord \circ\wt{\iota}(\nu) \in SpGr(k,2n)$ by the
	antidiagonality we required of the Gram matrix.
	
	The right-hand square, and the inclusion $T^n\into T^{2n}$, 
	induce the ring homomorphisms 
	\[
	\begin{tikzcd}
	H^*_{T^{2n}}(Gr(k,2n)) \arrow{r}{f_1} \arrow{d}[swap]{g_1} 
	& H^*_{T^n}(Gr(k,2n)) \arrow{r}{f_2\ =\ \iota^*} \arrow{d}{g_2} 
	& H^*_{T^n}(SpGr(k,2n)) \arrow{d}{g_3} \\   
	H^*_{T^{2n}}(Gr(k,2n)^{T^{2n}}) \arrow{r}{h_1} 
	& H^*_{T^n}(Gr(k,2n)^{T^n}) \arrow{r}{h_2} 
	& H^*_{T^n}(SpGr(k,2n)^{T^n})
	\end{tikzcd}
	\]
	and since each $g_i$ is injective (see e.g. \cite{Kirwan}), 
	we can compute along the bottom row, which is the proof technique used
	in \cite{KZJ} and \S \ref{sec:proof}.
	On each of our flag manifolds, we define our Schubert classes 
	as associated to the closures of orbits of $B_-$ or $B_- \cap Sp_{2n}$.

	\section{Scattering diagrams and their 
		tensor calculus}\label{sec:TC}
	\newcommand\tensor\otimes
	
	In the statement and proof of Theorem \ref{thm:generalk}, we work
	with \textit{half-puzzles}, i.e., labeled half-triangles $2n\hp$
	of size $2n$, tiled with the triangle and rhombus puzzle pieces described in \S \ref{ssec:duality}, as well as half-rhombus puzzle
	pieces obtained by cutting the existing self-dual ones vertically in half.  
	As discussed earlier, a half-puzzle can be considered as half of a
	self-dual puzzle with all three sides of length $2n$. 
	In our notation, a ``rhombus'' can
	also be made of a $\Delta$ and a $\nabla$ triangle glued together.
	
	To linearize the puzzle pictures and relate them back to the
	restriction of cohomology classes, we consider the puzzle labels
	$\{0,10,1\}$ as indexing bases for three spaces
	$\C^3_G, \C^3_R, \C^3_B$ (Green, Red, Blue). In our scattering diagrams below, each coloured edge will carry its
	corresponding vector space.

	\begin{enumerate}[leftmargin=0.2in]
		\item Take an \textbf{unlabeled} size $2n$ half-puzzle triangle $2n\hp$ tiled by rhombi, half-rhombi (on the East) and triangles (on the South) as before,
		with assigned ``spectral parameters'' $y_1,\hdots,y_n, -y_n,\hdots,-y_1$
		on the Northwest side.
		\vspace{0.3cm}
		
		\item 		\parpic[r][b]{%
			\begin{minipage}[b]{50mm}\vspace{-0.5cm}
				\[\begin{tikzpicture}[baseline=1.5cm]
				\foreach \i in {1,...,4} \draw[dg,arrow=1.2*\i/(\i+1.5)] (\i+0.5,0.5) -- node[left=-1mm,pos=1.1*\i/(\i+1),black] {$\sss y_{\i}$}++(-\i/2,\i/2);
				\foreach \i in {1,...,4} \draw[dg,arrow=1.2*\i/(\i+1.5)] (5,5-\i) -- node[left=-1mm,pos=1.1*\i/(\i+1),black] {$\sss -y_{\i}$}++(-\i/2,\i/2);
				\foreach \i in {1,...,4} \draw[dr,arrow=0.6*\i/(\i+0.2)] (\i+0.5,0.5) -- (5,5-\i);
				\draw[db,invarrow=0.3,invarrow=0.8] (3.5,0.5) -- node[left,pos=0.85,black] {$\sss y_3$}(3.5,-0.5) (2.5,0.5) -- node[left,pos=0.85,black] {$\sss y_2$}(2.5,-0.5);
				\draw[db,invarrow=0.3,invarrow=0.8] (1.5,0.5) -- node[left,pos=0.85,black] {$\sss y_1$}(1.5,-0.5) (4.5,0.5) -- node[left,pos=0.85,black] {$\sss y_4$}(4.5,-0.5);
				\draw[dashed,d] (5,-0.5) -- (5,4.75);
				\end{tikzpicture}\]
			\end{minipage}
		}
		
		Consider the dual-graph picture of strands, oriented upwards.
		Each rhombus corresponds to a crossing of two strands, each
		half-rhombus 
		to a bounce off the East wall and negates the spectral parameter,
		and each triangle 
		to a trivalent vertex with all parameters equal.

		We also colour the Northwest-pointing strands green, Northeast-pointing red,
		and North-pointing blue.
		
		\vspace{0.5cm}
		
		\item We let $a$ and $b$ denote two spectral parameters from Step 1. We assign the following linear maps
		\vspace{0.2cm}
		\begin{itemize}[leftmargin=0.2in]
			\item to each crossing of two strands with left and right parameters
			$a$ and $b$, and colours $C$ and $D$, a linear map
			$R_{CD}(a-b): \C_C^3 \tensor \C_D^3 \longrightarrow \C_D^3 \tensor \C_C^3$;
			\item to each wall-bounce of a colour $C$ strand with parameter $a$ bouncing to $-a$, a
			map $K_C(a):\C^3 \rightarrow \C^3$;
			\item to each trivalent vertex with incoming blue strand and outgoing green and red strands, all with
			parameters $a$, a map
			$U(a): \C^3_B \longrightarrow \C^3_G \tensor \C^3_R$.
		\end{itemize}
	\end{enumerate}
	
	Connecting two strands corresponds to composing the corresponding maps, so the whole $2n\hp$ corresponds to a linear map
	$\Phi: (\C^3_B)^{\tensor n} \longrightarrow (\C^3_G)^{\tensor 2n}$.
	
	\newcommand\uptr[4][]{ \tikzif[baseline=0.34cm,scale=1.25]{ \getshift
			\draw[thick,black] (-0.5,0) -- node[edgelabel] (horiz)
			{$\vphantom{\noparen{#3}}\smash{#3}$} ++(0:1) --
			node[edgelabel,right,xshift=-\myshift] (NE) {$#4$} ++(120:1) --
			node[edgelabel,left,xshift=\myshift] (NW) {$#2$} ++(240:1) --
			cycle; 
			\ifx\&#1\&\else\node[invlabel] at (0,0.33) {$\sss#1$};\fi }}
	
	\newcommand\downtr[4][]{ \tikzif[baseline=-0.54cm,scale=1.25]{
			\getshift\begin{scope}[scale=-1]\draw[thick,black] (-0.5,0) --
				node[edgelabel] (horiz) {$\vphantom{\noparen{#3}}\smash{#3}$} ++(0:1) -- 
				node[edgelabel,left,xshift=\myshift] (NE) {$#4$} ++(120:1) --
				node[edgelabel,right,xshift=-\myshift] (NW) {$#2$} ++(240:1) --
				cycle; 
				\ifx\&#1\&\else\node[invlabel] at (0,0.33) {$\sss#1$};\fi
			\end{scope}
	}}
	
	\begin{defn}[The $R$-, $U$-, and $K$-matrices]\label{def:RKU} In terms
		of the bases of $\C^3_G,\C^3_R,\C^3_B$ indexed by $\{0,10,1\}$, the
		above sparse matrices can be written compactly as follows 
		(where a labeled diagram corresponds to the coefficient of the map in those
		basis elements):
		\begin{align*}
		R_{CC}(a-b): 	
		&\tikz[baseline=0,xscale=0.5]
		{\draw[arrow=0.25,d] (-0.75,-0.5) node[below] {$\m k$} -- (0.75,0.5) node[above] {$\m j$};
			\draw[arrow=0.25,d] (0.75,-0.5) node[below] {$\m l$} -- (-0.75,0.5) node[above] {$\m i$};}=
		\begin{cases}1 & \text{ if } (i,j)=(k,l), \\
		b-a & \text{ if } (i,j,k,l) \in \{(1,0,0,1),(10,0,0,10),(1,10,10,1)\} \end{cases}
		\\
		& \text{ where } C \in \{R,G,B\} \text{ and the two strands are any identical colour} 
		\\
		R_{RG}(a-b): 
		&\tikz[baseline=0,xscale=0.5]
		{\draw[arrow=0.25,dg] (0.75,-0.5) node[below] {$\m l$} -- (-0.75,0.5) node[above] {$\m i$};
			\draw[arrow=0.25,dr] (-0.75,-0.5) node[below] {$\m k$} -- (0.75,0.5) node[above] {$\m j$};}=
		\begin{cases}a-b & \text{ if } (i,j,k,l)=(0,1,1,0), \\
		1 & \text{ if } (i,j,k,l) \in 
		\left\{ \begin{matrix}
		0^4,1^4,0^2 1 (10),0 (10) 1^2,0 (10)^2 0, \\
		1\ 0^2 1,1^2 (10) 0,(10) 1\ 0^2,(10) 1^2(10)
		\end{matrix} \right\}
		\end{cases} 
		\end{align*}
		\begin{align*}
		K_R(a):& \tikz[baseline=0,xscale=0.5]
		{\draw[arrow=0.6,dg] (0,0) -- (-0.75,0.45) node[above] {$\m i$};
			\draw[arrow=0.6,dr] (-0.75,-0.45) node[below] {$\m j$} -- (0,0);
			\draw[dashed,thick] (0,0.5) -- (0,-0.5) ;} =	1 \text{ if } (i,j) \in \{(1,0),(0,1)\} \quad \quad 
		K_B(a): \tikz[baseline=0,xscale=0.5]
		{\draw[arrow=0.6,db] (0,0) -- (-0.75,0.45) node[above] {$\m i$};
			\draw[arrow=0.6,db] (-0.75,-0.45) node[below] {$\m j$} -- (0,0);
			\draw[dashed,thick] (0,0.5) -- (0,-0.5) ;}=	\begin{cases}1 & \text{ if } i=j, \\
		-2a & \text{ if } (i,j)=(1,0) \end{cases} \\
		U(a): & \tikz[baseline=0,xscale=0.5]
		{\draw[arrow=0.6,dg] (0,0) -- (-0.75,0.5) node[above] {$\m i$};
			\draw[arrow=0.6,dr] (0,0) -- (0.75,0.5) node[above] {$\m j$};
			\draw[invarrow=0.5,db] (0,0) -- (0,-0.5) node[right] {$\m k$};
		}=1 \text{ if }\; (i,j,k) \in \{(0,0,0),(0,10,1),(1,0,10),(1,1,1),(10,1,0)\}
		\end{align*}
		
		The subscripts $R,G,B$ on the maps indicate the colours of
		the incoming edges (listed counterclockwise). For each map, the matrix entries which are not listed are zero.
	\end{defn}
	
	Note that if we take the corresponding bases with lexicographic
	ordering, with alphabet ordered as $\{0,10,1\}$, then the matrices for
	$R_{CC}$ and $K_B$ are lower-triangular. See \cite{ZJ} for these $R$-matrices and \cite[\S 3]{KZJ} for their representation-theoretic origins.
	
	\begin{defn}\label{def:fug}
		With the above notation, let $\mathbf{P}$ be a half-puzzle with
		boundary labels $\halfuptri{\la}{\nu}$, where $\la \in 0^k1^{2n-k}$
		and $\nu \in (10)^{n-k}\{0,1\}^k$. 
		The fugacity $\fug(\mathbf{P})$ of $\mathbf{P}$ is the product over all puzzle pieces (dually: vertices) of the entries of
		the corresponding $R$-, $U$-, $K$-matrices.
	\end{defn}
	In this way, the summation over half-puzzles reproduces the full matrix product, i.e.,
	$
	\text{ the $(\la,\nu)$ matrix entry of $\Phi$} = 
	\sum_{\mathbf{P}} \Big\{\fug(\mathbf{P}) \; | \; \mathbf{P} \text{ is a puzzle with boundary \halfuptri{\la}{\nu} }\Big\}
	$.
	
	\begin{lem}\label{lem:TCP} The matrices defined in \S\ref{sec:TC} satisfy the following identities:
		\begin{enumerate}[i),leftmargin=0.2in]
			\item The Yang--Baxter equation.
			\begin{align}
			\begin{tikzpicture}[baseline=-3pt,y=2cm,scale=0.8]
			\draw[invarrow=0.2,rounded corners,dg] (-0.5,0.5) node[left=-1mm,black] {$\sss u_1$} -- (0.75,0) -- (1.5,-0.5) node[left=-1mm,black] {$\sss u_1$};
			\draw[invarrow=0.2,rounded corners,dr] (0.5,0.5) node[left=-1mm,black] {$\sss u_2$} -- (0.25,0) -- (0.5,-0.5) node[left=-1mm,black] {$\sss u_2$};
			\draw[invarrow=0.2,rounded corners,dr] (1.5,0.5) node[left=-1mm,black] {$\sss u_3$} -- (0.75,0) -- (-0.5,-0.5) node[left=-1mm,black] {$\sss u_3$};
			\end{tikzpicture}
			&=\begin{tikzpicture}[baseline=-3pt,y=2cm,scale=0.8]
			\draw[invarrow=0.2,rounded corners,dg] (-0.5,0.5) node[left=-1mm,black] {$\sss u_1$} -- (0.25,0) -- (1.5,-0.5) node[left=-1mm,black] {$\sss u_1$};
			\draw[invarrow=0.2,rounded corners,dr] (0.5,0.5) node[left=-1mm,black] {$\sss u_2$} -- (0.75,0) -- (0.5,-0.5) node[left=-1mm,black] {$\sss u_2$};
			\draw[invarrow=0.2,rounded corners,dr] (1.5,0.5) node[left=-1mm,black] {$\sss u_3$} -- (0.25,0) -- (-0.5,-0.5) node[left=-1mm,black] {$\sss u_3$};
			\end{tikzpicture}
			\hspace{2cm}
			\begin{tikzpicture}[baseline=-3pt,y=2cm,scale=0.8]
			\draw[invarrow=0.2,rounded corners,dg] (-0.5,0.5) node[left=-1mm,black] {$\sss u_1$} -- (0.75,0) -- (1.5,-0.5) node[left=-1mm,black] {$\sss u_1$};
			\draw[invarrow=0.2,rounded corners,dg] (0.5,0.5) node[left=-1mm,black] {$\sss u_2$} -- (0.25,0) -- (0.5,-0.5) node[left=-1mm,black] {$\sss u_2$};
			\draw[invarrow=0.2,rounded corners,dr] (1.5,0.5) node[left=-1mm,black] {$\sss u_3$} -- (0.75,0) -- (-0.5,-0.5) node[left=-1mm,black] {$\sss u_3$};
			\end{tikzpicture}
			&=\begin{tikzpicture}[baseline=-3pt,y=2cm,scale=0.8]
			\draw[invarrow=0.2,rounded corners,dg] (-0.5,0.5) node[left=-1mm,black] {$\sss u_1$} -- (0.25,0) -- (1.5,-0.5) node[left=-1mm,black] {$\sss u_1$};
			\draw[invarrow=0.2,rounded corners,dg] (0.5,0.5) node[left=-1mm,black] {$\sss u_2$} -- (0.75,0) -- (0.5,-0.5) node[left=-1mm,black] {$\sss u_2$};
			\draw[invarrow=0.2,rounded corners,dr] (1.5,0.5) node[left=-1mm,black] {$\sss u_3$} -- (0.25,0) -- (-0.5,-0.5) node[left=-1mm,black] {$\sss u_3$};
			\end{tikzpicture}\label{eqn:YBRG} \\
			\begin{tikzpicture}[baseline=-3pt,y=2cm,scale=0.8]
			\draw[invarrow=0.075, invarrow=0.42, invarrow=0.7,rounded corners,dg] (-0.5,0.5) node[left=-1mm,black] {$\sss u_1$} -- (0.75,0) -- (1.5,-0.5) node[left=-1mm,black] {$\sss u_1$} (0.5,0.5) node[left=-1mm,black] {$\sss u_2$} -- (0.25,0) -- (0.5,-0.5) node[left=-1mm,black] {$\sss u_2$} (1.5,0.5) node[left=-1mm,black] {$\sss u_3$} -- (0.75,0) -- (-0.5,-0.5) node[left=-1mm,black] {$\sss u_3$};
			\end{tikzpicture}
			&=\begin{tikzpicture}[baseline=-3pt,y=2cm,scale=0.8]
			\draw[invarrow=0.075, invarrow=0.42, invarrow=0.7,rounded corners,dg] (-0.5,0.5) node[left=-1mm,black] {$\sss u_1$} -- (0.25,0) -- (1.5,-0.5) node[left=-1mm,black] {$\sss u_1$} (0.5,0.5) node[left=-1mm,black] {$\sss u_2$} -- (0.75,0) -- (0.5,-0.5) node[left=-1mm,black] {$\sss u_2$} (1.5,0.5) node[left=-1mm,black] {$\sss u_3$} -- (0.25,0) -- (-0.5,-0.5) node[left=-1mm,black] {$\sss u_3$};
			\end{tikzpicture}
			\hspace{2cm}
			\begin{tikzpicture}[baseline=-3pt,y=2cm,scale=0.8]
			\draw[invarrow=0.075, invarrow=0.42, invarrow=0.7,rounded corners,db] (-0.5,0.5) node[left=-1mm,black] {$\sss u_1$} -- (0.75,0) -- (1.5,-0.5) node[left=-1mm,black] {$\sss u_1$} (0.5,0.5) node[left=-1mm,black] {$\sss u_2$} -- (0.25,0) -- (0.5,-0.5) node[left=-1mm,black] {$\sss u_2$} (1.5,0.5) node[left=-1mm,black] {$\sss u_3$} -- (0.75,0) -- (-0.5,-0.5) node[left=-1mm,black] {$\sss u_3$};
			\end{tikzpicture}
			&=\begin{tikzpicture}[baseline=-3pt,y=2cm,scale=0.8]
			\draw[invarrow=0.075, invarrow=0.42, invarrow=0.7,rounded corners,db] (-0.5,0.5) node[left=-1mm,black] {$\sss u_1$} -- (0.25,0) -- (1.5,-0.5) node[left=-1mm,black] {$\sss u_1$} (0.5,0.5) node[left=-1mm,black] {$\sss u_2$} -- (0.75,0) -- (0.5,-0.5) node[left=-1mm,black] {$\sss u_2$} (1.5,0.5) node[left=-1mm,black] {$\sss u_3$} -- (0.25,0) -- (-0.5,-0.5) node[left=-1mm,black] {$\sss u_3$};
			\end{tikzpicture}\label{eqn:YBGGBB}
			\end{align}
			For example, the linear map form of the Northwest equation is
			\begin{align*}
			&     
			(R_{RG}(u_2-u_1) \tensor \Id) 
			\circ
			(\Id \tensor R_{RG}(u_3-u_1)) 
			\circ
			(R_{RR}(u_3-u_2) \tensor \Id) 
			\\ 
			=\,&
			(\Id \tensor R_{RR}(u_3-u_2)) 
			\circ
			(R_{RG}(u_3-u_1) \tensor \Id) 
			\circ
			(\Id \tensor R_{RG}(u_2-u_1)) 
			\end{align*}  
			\item Swapping of two trivalent vertices.
			\begin{equation}\label{eqn:triv}
			\begin{tikzpicture}[baseline=0]
			\foreach \i in {1,2} \draw[dg,arrow=0.6] (\i,0) -- node[left=-1mm,pos=0.8,black] {$\sss u_{\i}$} ++(-1.25,1.25);
			\foreach \i in {1,2} \draw[dr,arrow=0.6] (\i,0) -- node[right=-1mm,pos=0.8,black] {$\sss u_\i$} ++(1.25,1.25);
			\draw[rounded corners,db,invarrow=0.3] (1,0) -- (1,-0.25) -- (2,-0.75) -- node[right,pos=0.5,black] {$\sss u_1$} (2,-1);
			\draw[rounded corners,db,invarrow=0.3] (2,0) -- (2,-0.25) -- (1,-0.75) -- node[left,pos=0.5,black] {$\sss u_2$}(1,-1);
			\end{tikzpicture}=
			\begin{tikzpicture}[baseline=0]
			\draw[dg,arrow=0.4,rounded corners] (1,0) -- ++(-0.25,0.25) -- node[pos=0.9,left=-2mm,black] {$\sss u_2$} ++(0,1);
			\draw[dg,arrow=0.4,rounded corners] (2,0) -- ++(-0.5,0.5) -- ++(-1.5,0.5) -- node[pos=0.4,left=-2mm,black] {$\sss u_1$} ++(-0.05,0.05);
			\draw[dr,arrow=0.4,rounded corners] (1,0) -- ++(0.5,0.5) -- ++(1.5,0.5) -- node[pos=0.2,right=-1mm,black] {$\sss u_2$} ++(0.05,0.05);
			\draw[dr,arrow=0.4,rounded corners] (2,0) -- ++(0.25,0.25) -- node[pos=0.9,right=-1mm,black] {$\sss u_1$} ++(0,1);
			\draw[db,invarrow=0.3,invarrow=0.8] (1,0) -- node[pos=0.85,right] {$\sss u_2$} (1,-1) (2,0) -- node[pos=0.85,right] {$\sss u_1$} (2,-1);
			\end{tikzpicture}
			\end{equation}
			\begin{align*}
			&  
			(\Id \tensor R_{RG}(u_1-u_2) \tensor \Id) 
			\circ 
			(U(u_1) \tensor U(u_2)) 
			\circ 
			R_{BB}(u_2-u_1) 
			\\
			=\,& 
			(R_{GG}(u_2-u_1) \otimes R_{RR}(u_2-u_1))
			\circ 
			(\Id \otimes R_{RG}(u_2 - u_1) \otimes \Id) 
			\circ 
			(U(u_2)\tensor U(u_1)) 
			\end{align*}
			\item The reflection equation.
			\begin{align}\label{eqn:reflection}
			\begin{tikzpicture}[baseline=0.4cm,scale=0.8]
			\draw[dg,arrow=0.6] (0,0) -- node[left=-1mm,pos=0.9,black] {$\sss u_1$} ++(-2,2);
			\draw[dr,invarrow=0.4] (0,0) -- node[left=-1mm,pos=0.8,black] {$\sss -u_1$} ++(-1.5,-1);
			\draw[dg,arrow=0.6] (0,1) -- node[left=-1mm,pos=0.8,black] {$\sss u_2$} ++(-1,1);
			\draw[dr,rounded corners,invarrow=0.4] (0,1) -- (-1.5,0) -- node[left=-1mm,pos=0.2,black] {$\sss -u_2$} ++(0.25,-0.25) -- (-0.5,-1);
			\draw[dashed,d] (0,2) -- (0,-1);
			\end{tikzpicture}
			\hspace{0.25cm}=\hspace{-0.25cm}
			\begin{tikzpicture}[baseline=0.4cm,scale=0.8]
			\draw[dr,invarrow=0.6] (0,0) -- node[right=-1mm,pos=0.8,black] {$\sss -u_2$} ++(-1,-1);
			\draw[dg,rounded corners,arrow=0.4] (0,0) -- (-1.5,1) -- node[right=-1mm,pos=0.9,black] {$\sss u_2$} ++(0.75,1) ;
			\draw[dr,invarrow=0.6] (0,1) -- node[right=-1mm,pos=0.9,black] {$\sss -u_1$} ++(-2,-2);
			\draw[dg,arrow=0.4] (0,1) -- node[left=-1mm,pos=0.9,black] {$\sss u_1$} ++(-1.5,1);
			\draw[dashed,d] (0,2) -- (0,-1);
			\end{tikzpicture}
			\hspace{1.25cm}
			\begin{tikzpicture}[baseline=0.4cm,scale=0.8]
			\draw[db,arrow=0.6] (0,0) -- node[left=-1mm,pos=0.9,black] {$\sss u_1$} ++(-2,2);
			\draw[db,invarrow=0.4] (0,0) -- node[left=-1mm,pos=0.8,black] {$\sss -u_1$} ++(-1.5,-1);
			\draw[db,arrow=0.6] (0,1) -- node[left=-1mm,pos=0.8,black] {$\sss u_2$} ++(-1,1);
			\draw[db,rounded corners,invarrow=0.4] (0,1) -- (-1.5,0) -- node[left=-1mm,pos=0.2,black] {$\sss -u_2$} ++(0.25,-0.25) -- (-0.5,-1);
			\draw[dashed,d] (0,2) -- (0,-1);
			\end{tikzpicture}
			\hspace{0.25cm}=\hspace{-0.25cm}
			\begin{tikzpicture}[baseline=0.4cm,scale=0.8]
			\draw[db,invarrow=0.6] (0,0) -- node[right=-1mm,pos=0.8,black] {$\sss -u_2$} ++(-1,-1);
			\draw[db,rounded corners,arrow=0.4] (0,0) -- (-1.5,1) -- node[right=-1mm,pos=0.9,black] {$\sss u_2$} ++(0.75,1) ;
			\draw[db,invarrow=0.6] (0,1) -- node[right=-1mm,pos=0.9,black] {$\sss -u_1$} ++(-2,-2);
			\draw[db,arrow=0.4] (0,1) -- node[left=-1mm,pos=0.9,black] {$\sss u_1$} ++(-1.5,1);
			\draw[dashed,d] (0,2) -- (0,-1);
			\end{tikzpicture}		
			\end{align}
			In linear map terms, the left equation says
			\begin{align*}
			&
			(\Id \tensor K_R(-u_2)) 
			\circ 
			R_{RG}(-u_2-u_1) 
			\circ  
			(\Id \tensor K_R(-u_1)) 
			\circ 
			R_{RR}(-u_1+u_2) 
			\\
			=\,&
			R_{GG}(u_2-u_1)
			\circ 
			(\Id \tensor K_R(-u_1)) 
			\circ 
			R_{RG}(-u_1-u_2) 
			\circ 
			(\Id \tensor K_R(-u_2)) 
			\end{align*}
			
			\item $K$-fusion.
			\begin{equation}\label{eqn:fusion}
			\begin{tikzpicture}[baseline=.7cm]
			\draw[dg,arrow=0.6] (-0.5,0.5) -- node[left=-1mm,pos=0.9,black] {$\sss u_1$} ++(-1.5,1.5);
			\draw[dg,arrow=0.6] (0,1) -- node[left=-1mm,pos=0.9,black] {$\sss -u_1$} ++(-1,1);
			\draw[dr,invarrow=0.6] (0,1) -- node[left=0mm,pos=0.2,black] {$\sss u_1$} ++(-0.5,-0.5);
			\draw[db,rounded corners,invarrow=0.5] (-0.5,0.5) -- (-0.5,0.2)-- node[left=-1mm,pos=0.1,black] {$\sss u_1$} ++ (0.5,-0.4);
			\draw[db,invarrow=0.5] (0,-0.2)-- node[left=0mm,pos=0.55,black] {$\sss -u_1$} (-0.5,-0.7);
			\draw[dashed,d] (0,2) -- (0,-0.7);
			\end{tikzpicture}
			\hspace{0.25cm}=\hspace{-0.25cm}
			\begin{tikzpicture}[baseline=.7cm]
			\draw[dg,rounded corners,arrow=0.4] (-0.65,0.35) -- (-1.5,1) -- node[right=-1mm,pos=0.9,black] {$\sss -u_1$} ++(0.75,1) ;
			\draw[dr,invarrow=0.6] (0,1) -- node[left=-1mm,pos=0.2,black] {$\sss -u_1$} ++(-0.65,-0.65);
			\draw[dg,arrow=0.4] (0,1) -- node[left=-1mm,pos=0.9,black] {$\sss u_1$} ++(-1.5,1);
			\draw[db,invarrow=0.5] (-0.65,0.35) -- node[left=0mm,pos=0.8,black] {$\sss -u_1$} ++ (0,-1.05);
			\draw[dashed,d] (0,2) -- (0,-0.7);
			\end{tikzpicture}
			\qquad\qquad
			\begin{array}{c}
			(\Id \tensor K_R(u_1)) 
			\circ 
			U(u_1) 
			\circ 
			K_B(-u_1) 
			\\ = \\
			R_{GG}(-2u_1)
			\circ 
			(\Id \tensor K_R(-u_1)) 
			\circ 
			U(-u_1) 
			\end{array}
			\end{equation}
		\end{enumerate}
	\end{lem}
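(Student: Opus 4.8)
The plan is to treat all four parts as identities between explicit, sparse linear maps on tensor powers of $\C^3$, and to split the work according to its novelty: the $R$- and $U$-matrix relations (i) and (ii) belong to the integrable model underlying $2$-step puzzles and are essentially inherited from \cite{ZJ,KZJ}, whereas the $K$-matrix relations (iii) and (iv) are the genuinely new content. Because every map in Definition \ref{def:RKU} is sparse and the ambient spaces are small ($\C^3\tensor\C^3\tensor\C^3$ for Yang--Baxter, $\C^3\tensor\C^3$ for the reflection equation, and $\C^3_B\to\C^3_G\tensor\C^3_R$ for $K$-fusion), each identity can in the last resort be confirmed by evaluating both composites on basis vectors; the point of the proposal is to organise matters so that only the $K$-relations demand real computation.

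For part (i), I would first observe that $R_{CC}(u)$ is the standard rational $R$-matrix: it is $\Id$ plus a multiple of the ``sorting'' operator that crosses $\ell\tensor\ell'\mapsto\ell'\tensor\ell$ exactly on the inversions of the ordered alphabet $0<10<1$, the object whose Yang--Baxter equation is classical and recorded in \cite{ZJ}. The mixed-colour matrix $R_{RG}$ and the various monochromatic ones are the components of a single solution of the coloured Yang--Baxter equation in that same reference, with colour merely recording the direction of each strand; thus both (\ref{eqn:YBRG}) and (\ref{eqn:YBGGBB}) are instances of that solution, and I would invoke \cite{ZJ,KZJ} and check only that the colour conventions match rather than recheck $27$ coefficients by hand. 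Part (ii) is the compatibility of the fusion vertex $U$ with a crossing: here I would identify $U$ with the trivalent intertwiner of \cite[\S3]{KZJ}, realising $\C^3_B$ inside $\C^3_G\tensor\C^3_R$ (one checks directly that the five listed entries make $U$ injective), for which (\ref{eqn:triv}) is precisely the relation established there, so that again only the matching of the counterclockwise colour listing and the common vertex parameter remains.

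The new content is in (iii) and (iv). The reflection equation (\ref{eqn:reflection}) is the boundary Yang--Baxter equation; since $K_R$ (which swaps $0\leftrightarrow1$ and annihilates $10$) and $K_B$ (the identity corrected by the single $-2a$ entry) are so simple, I would verify both the green/red and the blue versions directly on $\C^3\tensor\C^3$, carefully tracking the sign flip $u\mapsto-u$ produced at the wall. I expect the main obstacle to be part (iv), the $K$-fusion equation (\ref{eqn:fusion}). Conceptually this asserts that $K_B$ is the boundary map \emph{fused} from $K_R$ through $U$: reflecting a blue strand and splitting it, then reflecting the resulting red component (the left side, $(\Id\tensor K_R(u_1))\circ U(u_1)\circ K_B(-u_1)$), agrees with first splitting via $U(-u_1)$, reflecting the red component, and finally crossing the two green strands by $R_{GG}(-2u_1)$.

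I would prove (\ref{eqn:fusion}) by direct verification on the three basis vectors $0,10,1$ of $\C^3_B$, since this is the most robust route for a genuinely new relation, while recording the fusion interpretation above as the reason it holds and as a possible conceptual derivation from (\ref{eqn:reflection}) together with (ii). The crux, and the reason this $K$-fusion relation is novel, is that the $-2a$ entry of $K_B$ and the correcting factor $R_{GG}(-2u_1)$ must conspire exactly with the two \emph{opposite} spectral parameters $\pm u_1$ carried by the strand before and after it bounces off the wall; getting these signs and normalisations to cancel is precisely where the argument could fail, and hence where I would concentrate the verification.
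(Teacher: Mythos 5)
Your proposal is correct and matches the paper's (implicit) treatment: the paper states Lemma \ref{lem:TCP} without proof, attributing the $R$- and $U$-matrix relations to \cite{ZJ} and \cite[\S 3]{KZJ} and calling the reflection equation standard, so the intended argument is exactly the finite verification on the explicitly given sparse matrices that you describe. Your identification of the $K$-fusion equation (\ref{eqn:fusion}) as the one genuinely new relation requiring a direct check agrees with the introduction's remark that it is ``possibly novel,'' and the basis-vector computation you outline does go through (e.g.\ on $|0\rangle_B$ both sides equal $|0\rangle\tensor|1\rangle+|10\rangle\tensor|0\rangle+2u_1\,|1\rangle\tensor|0\rangle$, the $2u_1$ arising from the $-2a$ entry of $K_B(-u_1)$ on the left and from $R_{GG}(-2u_1)$ on the right).
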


	\section{AJS/Billey formul\ae\ as scattering diagrams}
	\label{sec:AJSBilley}
	
	We first discuss the general AJS/Billey formula for restricting an equivariant Schubert class to a torus-fixed point, and then consider the special cases of types $A$ and $C$. 
	Let $G$ be an algebraic group and fix a pinning $G \geq B \geq T$,
	with $W_G = N_G(T)/T$. Let $B_-$ denote the opposite Borel and $P\geq B$
	a parabolic, with Weyl group $W_P$. We recall that Schubert classes are indexed by $W_G/W_P$,
	which we identify with strings (or signed strings in type $C$), on which $W_G$ acts by permuting/negating positions: $\pi W_P \mapsto \omega \circ \pi^{-1}$
	(see prop. \ref{prop:ajsbwiring} for $\omega$).
	In particular for $P=B$ our indexing is inverse to the usual convention;
	this inversion is forced on us by the necessary use 
	for general $P$ of strings-with-repeats,
	e.g. binary strings rather than Grassmannian permutations.
	
	\begin{prop}\label{prop:AJSB}\begin{enumerate}
			\item (\cite{AJS,Billey}) For the Schubert class
			$S_{\pi} := \left[\overline{B_- \pi B}/B\right] \in H^{\ast}_T(G/B)$, \\
			$\pi,\sigma \in W_G$, and $Q=(q_1,\hdots,q_k)$ a reduced word in
			simple reflections with $\prod Q=\sigma$, 
			the AJS/Billey formula tells us that
			\begin{equation}\label{eqn:AJSB}
			S_{\pi}|_{\sigma}=\sum_{\small\begin{array}{c} R \subseteq Q\\ \prod R=\pi\end{array}} 
			\prod^k_{i=1}(\widehat{\alpha_{q_i}}^{\left[q_i \in
				R\right]}q_i)\cdot 1=\sum_{\small\begin{array}{c} R \subseteq Q\\ 
				\prod R=\pi\end{array}}\prod_{i \in R}\beta_i \; \in \;
			H^{\ast}_T(\text{pt}) \end{equation} where $\beta_i := q_1q_2 \hdots
			q_{i-1}\cdot\alpha_{q_i}$ and the summation is over reduced subwords
			$R$ of $Q$.
			\item 
			To compute a point restriction $S_\lambda|_\mu$ on $G/P$, 
			where $\lambda,\mu \in W_G/W_P$, we use
			lifts $\wt{\la},\wt{\mu} \in W_G$ such that $\wt{\la}$ is the
			shortest length representative of $\la$, and observe that
			$ S_{\la}|_{\mu} = S_{\raisebox{0pt}[0pt]{$\sss\wt{\la}$}}|_{\wt{\mu}} $.
		\end{enumerate}
	\end{prop}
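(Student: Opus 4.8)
The plan is to handle the two parts separately. Part (1) is the classical AJS/Billey formula, so I would recall it from \cite{AJS,Billey}, indicating the inductive proof for the reader's convenience; part (2) is then a short functoriality argument reducing point-restrictions on $G/P$ to point-restrictions on $G/B$, where part (1) applies.

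For part (1), fix the reduced word $Q=(q_1,\dots,q_k)$ for $\sigma$ and induct on $\ell(\sigma)=k$, peeling off the \emph{last} letter: write $\sigma=\sigma'' s_{q_k}$ with $\ell(\sigma'')=k-1$ and reduced word $Q''=(q_1,\dots,q_{k-1})$. Since each $\beta_i=q_1\cdots q_{i-1}\cdot\alpha_{q_i}$ depends only on a prefix of $Q$, the values $\beta_1,\dots,\beta_{k-1}$ are literally unchanged on passing to $Q''$, while $\beta_k=\sigma''\cdot\alpha_{q_k}$. Partitioning the reduced subwords $R\subseteq Q$ with $\prod R=\pi$ according to whether the last index lies in $R$ gives
\begin{equation*}
\sum_{\substack{R\subseteq Q\\ \prod R=\pi}}\ \prod_{i\in R}\beta_i
=\sum_{\substack{R\subseteq Q''\\ \prod R=\pi}}\ \prod_{i\in R}\beta_i
\;+\;(\sigma''\cdot\alpha_{q_k})\sum_{\substack{R\subseteq Q''\\ \prod R=\pi s_{q_k}}}\ \prod_{i\in R}\beta_i,
\end{equation*}
where the second summand occurs only when $\pi s_{q_k}<\pi$ (otherwise no reduced $R$ can contain the last index). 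By the inductive hypothesis the two inner sums are $S_\pi|_{\sigma''}$ and $S_{\pi s_{q_k}}|_{\sigma''}$, so it suffices to verify the matching one-step relation $S_\pi|_\sigma=S_\pi|_{\sigma''}+(\sigma''\cdot\alpha_{q_k})\,S_{\pi s_{q_k}}|_{\sigma''}$ on the geometric side; this is the standard recursion for restrictions along the $T$-invariant curve joining the fixed points $\sigma''$ and $\sigma=\sigma''s_{q_k}$ (equivalently, the action of the divided-difference operator $\partial_{q_k}$), and is exactly the inductive step carried out in \cite{AJS,Billey}. With the base case $S_\pi|_e=\delta_{\pi,e}$ the induction closes, proving that the two displayed forms of \eqref{eqn:AJSB} agree.

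For part (2), let $p\colon G/B\to G/P$ be the projection, inducing $p^*\colon H^*_T(G/P)\to H^*_T(G/B)$. I would use two standard facts. First, $p^*S_\lambda=S_{\wt\lambda}$ for $\wt\lambda$ the shortest-length representative of $\lambda$: the preimage $p^{-1}\!\left(\overline{B_-\wt\lambda P}/P\right)$ equals $\overline{B_-\wt\lambda B}/B$, and because $p$ is a smooth fibration (with fiber $P/B$) this preimage has the same codimension, so it represents $p^*S_\lambda$ and equals $S_{\wt\lambda}$. Second, localization is functorial: for $w\in(G/B)^{T}$ with image $p(w)=wW_P\in(G/P)^{T}$ and any $\xi\in H^*_T(G/P)$, the inclusion of $w$ factors that of $p(w)$ through $p$, whence $(p^*\xi)|_w=\xi|_{p(w)}$. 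Choosing any lift $\wt\mu$ of $\mu$, so that $p(\wt\mu)=\mu$, and taking $\xi=S_\lambda$ yields
\begin{equation*}
S_\lambda|_\mu=S_\lambda|_{p(\wt\mu)}=(p^*S_\lambda)|_{\wt\mu}=S_{\wt\lambda}|_{\wt\mu},
\end{equation*}
which is the assertion of part (2). In particular the right-hand side is independent of the chosen lift $\wt\mu$, since $S_{\wt\lambda}=p^*S_\lambda$ is pulled back from $G/P$ and hence constant along the fiber of fixed points over $\mu$; the indexing and inversion conventions recalled before the proposition enter only as bookkeeping and do not affect this argument.

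The main obstacle, were one to prove part (1) from scratch, is precisely the one-step geometric recursion above — the passage through the $T$-invariant curve, or equivalently the divided-difference operator — since the subword bookkeeping is then purely formal; but as this is the classical content of \cite{AJS,Billey} we simply cite it. For part (2) the only nonformal input is the first fact, the identification $p^*S_\lambda=S_{\wt\lambda}$, which rests on the smoothness of $p$ and the equicodimensionality of the preimage of the Schubert variety; everything else is functoriality of pullback and of localization.
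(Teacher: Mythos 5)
The paper itself offers no proof of this proposition: part (1) is quoted from \cite{AJS,Billey} and part (2) is presented as an observation, so there is nothing internal to compare against. Your fleshing-out of both parts is correct and follows the standard route. For part (1), the subword bookkeeping (peeling off the last letter, splitting according to whether $k\in R$, and noting that reducedness forces $\pi q_k<\pi$ for the second term) is exactly right, and you correctly identify that the entire content lies in the one-step relation $S_\pi|_{\sigma'' q_k}=S_\pi|_{\sigma''}+(\sigma''\cdot\alpha_{q_k})S_{\pi q_k}|_{\sigma''}$. One caution: the $T$-invariant curve joining $\sigma''$ to $\sigma$ only yields the GKM divisibility $S_\pi|_\sigma\equiv S_\pi|_{\sigma''}\pmod{\sigma''\cdot\alpha_{q_k}}$, not the identification of the quotient with $S_{\pi q_k}|_{\sigma''}$; the latter genuinely requires the divided-difference/nil-Hecke argument, which is the nontrivial content of the citation — you acknowledge this, so it is a looseness of phrasing rather than a gap. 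Also, your induction establishes the rightmost expression in \eqref{eqn:AJSB}; matching it to the middle operator expression is a separate (formal) unwinding. For part (2), the functoriality-of-localization step is clean, and the key geometric input $p^{-1}\bigl(\overline{B_-\wt\lambda P}/P\bigr)=\overline{B_-\wt\lambda B}/B$ is where minimality of $\wt\lambda$ enters: it holds because $\ell(\wt\lambda w)=\ell(\wt\lambda)+\ell(w)$ for $w\in W_P$, so every cell $B_-\wt\lambda wB/B$ in the preimage already lies in $\overline{B_-\wt\lambda B}/B$; it would be worth making that one line explicit. Your remark that the answer is independent of the lift $\wt\mu$ is a nice sanity check consistent with the paper's statement, which singles out only $\wt\lambda$ as the shortest representative.
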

	
	\junk{
		\begin{rem}
			\begin{enumerate}[i)]
				\item Note that Equation (\ref{eqn:AJSB})
				still holds if $Q$ is not reduced, but choosing it in this way
				guarantees that the $\beta_i$ are positive roots which occur with
				multiplicity at most one, i.e. this formula is positive in the sense
				of \cite{Graham}.
				\item Parts (1) and (2) of Proposition
				\ref{prop:AJSB} together give us the AJS/Billey formula for partial
				flag varieties $G/P$. \icom{cite Kostant-Kumar}
			\end{enumerate}
	\end{rem} }
	
	Below we give a diagrammatic description of the formula from
	Proposition \ref{prop:AJSB} in the cases when $(G,W_G)$ is $(GL_{2n}, S_{2n})$ or $(Sp_{2n},S_n \ltimes (\Z/2\Z)^n)$ using the tensor calculus setting of \S \ref{sec:TC}. We first introduce some notation.

	Consider $\sigma$ in $W_G$ (generated by simple reflections $\{s^G_i\}$) and a reduced word for $\sigma$, $Q_{\sigma}=(q_1,\hdots,q_k)$ (where $q_i=s_{p_i}$ for some
	$p_i$). We can associate to it a \textit{wiring diagram}
	$D(Q_{\sigma})$ by assigning the diagrams below to the simple reflections $\{s^G_i~:~1\leq i \leq m_G-1\}$ for $(m_{GL_{2n}},m_{Sp_{2n}})=(2n,n)$, and to $s^{Sp_{2n}}_n$ respectively. Then, a word in simple reflections corresponds to a concatenation of such diagrams.   \vspace{-0.2cm}
	\[s^G_i \mapsto \tikz[baseline=0,xscale=0.5] {\draw[invarrow=0.5,d]
		(-4,0.5) node[above] {$\m 1$} -- (-4,-0.5); \draw (-3.75,0) node[right] {$\hdots$};
		\draw[invarrow=0.5,d] (-2,0.5) -- (-2,-0.5); \draw[invarrow=0.5,d] (2,0.5) 
		-- (2,-0.5); \draw (2.25,0) node[right] {$\hdots$}; \draw[invarrow=0.5,d]
		(4,0.5) node[above] {$\m m_G$} -- (4,-0.5); \draw[invarrow=0.3,d] (-1,0.5) node[above] {$\m i$}
		-- (1,-0.5); \draw[invarrow=0.3,d] (1,0.5) node[above] {$\m i+1$} --
		(-1,-0.5);}, \; \text{ for } 1 \leq i \leq m_G-1 \quad \quad \text{ and
	} \quad \quad s^{Sp_{2n}}_n \mapsto \tikz[baseline=0,xscale=0.5]
	{\draw[invarrow=0.5,d] (-4,0.5) node[above] {$\m 1$} -- (-4,-0.5); \draw (-3.75,0) node[right]
		{$\hdots$}; \draw[invarrow=0.5,d] (-2,0.5) -- (-2,-0.5);
		\draw[invarrow=0.3,d] (-1,0.5) node[above] {$\m n$} -- (0,0) --
		(-1,-0.5); \draw[dashed,d] (0,0.5) -- (0,-0.5);}
	\] Each wire in a wiring diagram is also assigned a spectral
	parameter. For $G=GL_{2n}$, they are $y_1,\hdots, y_{2n}$ along the top (which we need to later specialize to $y_1,\hdots,y_n, -y_n,\hdots,-y_1$ as in the maps $f_1,h_1$ in \S \ref{sec:setup}), and for $G=Sp_{2n}$ they are $y_1,\hdots,y_n$. 
	
	In the context of the tensor calculus from \S \ref{sec:TC}, the wiring
	diagram $D(Q_{\sigma})$ can be interpreted as a scattering diagram,
	i.e., giving a map $(\C^3_C)^{\otimes m_G} \rightarrow (\C^3_C)^{\otimes m_G}$;
	we replace each crossing with $R_{GG}$ in the $GL_{2n}$ (and $C=G$) case
	or with $R_{BB}$ in the $Sp_{2n}$ (and $C=B$) case, 
	and replace each bounce with $K_B$ which also negates the spectral parameter.
	For instance, take $G=Sp_6$ and $\sigma=31\bar2$, $Q_{\sigma}=(s_2,s_3,s_1)$, then
	\vspace{-0.2cm}
	\[D(Q_{\sigma})=\begin{tikzpicture}[baseline=0cm]
	\draw[db,invarrow=0.4] (-2,1) node[left=-1mm,black] {$\sss y_1$}  -- (-1.5,-1) node[left=-1.5mm,black] {$\sss y_1$};
	\draw[db,invarrow=0.4] (-1.5,1) node[left=-1.5mm,black] {$\sss y_2$} -- (-0.25,0.25) -- (-0.75,-1) node[left=-1.5mm,black] {$\sss -y_2$};
	\draw[db,invarrow=0.4] (-0.75,1) node[left=-1mm,black] {$\sss y_3$} -- (-2,-1) node[left=-1mm,black] {$\sss y_3$};
	\draw[dashed,d] (-0.25,1) -- (-0.25,-1);
	\end{tikzpicture} \quad \quad 
	\begin{array}{c}
	(\Id \otimes R_{BB}(y_3 - y_2))
	\circ 
	(Id^{\otimes 2} \otimes K_B(-y_2)) 
	\circ 
	(R_{BB}(y_3 - y_1) \otimes \Id) 
	: \\ (\C^3_B)^{\otimes 3} \rightarrow (\C^3_B)^{\otimes 3}\end{array}\]
	
	\begin{prop}\label{prop:ajsbwiring}
		Let $\lambda,\mu$ be strings in $0,10,1$ as in \S\ref{sec:setup},
		which we identify with cosets $W_G/W_P$
		where $W_G$ is of type $C$ and $P$ is maximal, or
		of type $A$ and $P$ is maximal or submaximal. 
		Let $\omega_{Gr}=0\hdots 0 \ 1\hdots 1 \in 0^k 1^{2n-k}$ for
		$G/P = Gr(k,\C^{2n})$,
		$\omega_{SpGr} = 0\hdots 0 \ 10 \hdots 10 \in 0^k (10)^{n-k}$ for
		$G/P = SpGr(k,\C^{2n})$, or
		$\omega_{Fl} = 0\hdots 0 \ 10 \hdots 10 \ 1 \hdots 1 \in 0^j
		(10)^{k-j} 1^{2n-k}$ for $G/P = Fl(j,k;\, \C^{2n})$.
		Make a wiring diagram as just explained, 
		using a reduced word for the shortest lift $\wt{\mu}$; interpret it as
		a scattering diagram map, using the $R_{BB}(=R_{GG})$ matrix
		for crossings and (in type $C$) $K_B$ for bounces. 
		Then $S_{\lambda}|_{\mu}$ is the $(\lambda,\omega_{G/P})$ 
		matrix entry of the resulting product.
	\end{prop}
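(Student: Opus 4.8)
The plan is to show that the scattering-diagram matrix product attached to $D(Q_{\wt\mu})$ reproduces, entry by entry, the AJS/Billey sum of Proposition \ref{prop:AJSB}. By Proposition \ref{prop:AJSB}(2) the point restriction $S_\la|_\mu$ on $G/P$ equals $S_{\wt\la}|_{\wt\mu}$ computed from a reduced word for the shortest lift $\wt\mu$, so I would fix such a word $Q_{\wt\mu}=(q_1,\dots,q_k)$ and analyse the resulting product of $R_{BB}=R_{GG}$ (and, in type $C$, $K_B$) matrices. The element $\omega_{G/P}$ is by construction the $\{0,10,1\}$-string of the identity coset, and it will play the role of the ``vacuum'' input: feeding the basis vector indexed by $\omega_{G/P}$ in at the bottom and reading off the coefficient of the basis vector indexed by $\la$ at the top is exactly the $(\la,\omega_{G/P})$ matrix entry we must evaluate.

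First I would expand the product into local states. By Definition \ref{def:RKU} each crossing contributes $R_{BB}(a-b)=\Id$ plus an off-diagonal term supported on the three transpositions $(0,1)\mapsto(1,0)$, $(0,10)\mapsto(10,0)$, $(10,1)\mapsto(1,10)$ with weight $b-a$, and each bounce contributes $K_B(a)=\Id$ plus the single off-diagonal term $(1)\mapsto(0)$ with weight $-2a$. Choosing at every vertex either its diagonal or its off-diagonal alternative writes the matrix entry as a sum over subsets $R$ of the vertices, each summand being the product of the chosen off-diagonal weights; propagating the string upward from $\omega_{G/P}$, an off-diagonal choice at the $i$-th vertex applies the simple reflection $q_i$ to the current string, while a diagonal choice leaves it unchanged.

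The key structural observation is that, in the alphabet order $0<10<1$, the listed off-diagonal moves are precisely the \emph{length-increasing} ones (this is the lower-triangularity of $R_{CC}$ and $K_B$ noted after Definition \ref{def:RKU}); in particular, when two adjacent letters are equal the crossing offers only its diagonal term, faithfully modelling the fact that such a reflection fixes the coset and cannot occur in a reduced subword building the shortest lift. Consequently any admissible configuration realizing a subset $R$ raises the length by exactly $|R|$, so $R$ is automatically reduced, and the string it produces at the top indexes the coset $(\prod R)\,W_P$ under $\pi W_P\mapsto\omega\circ\pi^{-1}$. Equating this with $\la$, whose shortest lift is $\wt\la$, forces $\prod R=\wt\la$; thus admissible configurations biject with the reduced subwords $R\subseteq Q_{\wt\mu}$ with $\prod R=\wt\la$, i.e.\ with the index set of the AJS/Billey sum, and in type $C$ the long reflection $s_n^{Sp_{2n}}$ is handled by $K_B$ exactly as the simple reflections are handled by $R_{BB}$.

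It then remains to match the weights. Placing $y_1,\dots,y_{m_G}$ on the top wires, the two parameters $a,b$ meeting at the $i$-th vertex are the values carried by the relevant wires once the preceding crossings $q_1,\dots,q_{i-1}$ have acted, so by the standard wiring-diagram bookkeeping the local root $a-b$ equals $q_1\cdots q_{i-1}\cdot\alpha_{q_i}=\beta_i$, and the $-2a$ at a bounce equals the corresponding long root. Hence the product of off-diagonal weights over $R$ is $\prod_{i\in R}\beta_i$, matching (\ref{eqn:AJSB}). I expect the main obstacle to be the sign bookkeeping: the off-diagonal entry is $b-a=-(a-b)$, and a bounce negates the spectral parameter, so one must verify that the $B_-$ convention for Schubert classes together with the inverse in the indexing $\pi W_P\mapsto\omega\circ\pi^{-1}$ turn these weights into the \emph{positive} roots $\beta_i$ of Proposition \ref{prop:AJSB}. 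Checking this sign, together with the specialization $y_{n+i}\mapsto -y_{n-i+1}$ that enters the $GL_{2n}$ case through the maps $f_1,h_1$ of \S\ref{sec:setup}, is where the care is needed.
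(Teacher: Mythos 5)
The paper gives no proof of this proposition --- it is explicitly deferred (``the essentially routine rewriting of Proposition \ref{prop:AJSB} \dots will appear elsewhere''), with only the remark that the principal thing to check is that $R_{BB}$ is the correct $R$-matrix for the three labels $\{0,10,1\}$. Your proposal is precisely that routine rewriting --- expanding the scattering-diagram product vertex by vertex, using the lower-triangularity of $R_{CC}$ and $K_B$ in the order $0<10<1$ to identify the nonzero configurations with reduced subwords of $Q_{\wt\mu}$ multiplying to $\wt\la$, and matching the local weights with the roots $\beta_i$ of (\ref{eqn:AJSB}) --- and it is correct in outline, with the one genuinely delicate point (the sign and orientation conventions that make the weights come out as the positive roots, and the handling of $\pi W_P\mapsto\omega\circ\pi^{-1}$) appropriately flagged rather than fully carried out, which is consistent with the level of detail the paper itself supplies.
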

	
	The essentially routine rewriting of Proposition \ref{prop:AJSB}
	to give Proposition \ref{prop:ajsbwiring} will appear elsewhere.
	The principal thing one checks is that $R_{BB}$ is the correct $R$-matrix
	for three labels $\{0,10,1\}$. In view of Proposition \ref{prop:ajsbwiring}, for $\la,\mu,\nu \in W_G/W_P$ as above, we denote
	\[\begin{tikzpicture}[baseline=-5.5mm, math mode,scale=1.2]
	\draw[thick] (0,-0.6) -- node[pos=0.5] {\nu} (2,-0.6); 
	\draw (0,0) -- (0,-0.6); \draw[dashed] (2,0) -- (2,-0.6); 
	\draw (0,0) -- node[pos=0.5] {\la} (2,0); \node at (1,-0.3) {\mu};
	\end{tikzpicture}:= \begin{array}{c}\text{the $(\la,\nu)$ matrix entry for the scattering diagram map} \\ \text{coming from a reduced word for $\widetilde{\mu}$.}\end{array}\]
	By the proposition, when $\nu=\omega_{G/P}$ this gives $S_{\la}|_{\mu}$.
	
	\section{Proof of Theorem \ref{thm:generalk}}
	\label{sec:proof}
	
	The proof of Theorem \ref{thm:jk} is very much as in \cite[\S 3]{KZJ} and
	will appear elsewhere. Theorem \ref{thm:n2n} is the $k=n$ special case
	of Theorem \ref{thm:generalk}. In fact, we give a more general puzzle rule for equivariant cohomology in Theorem \ref{thm:main}, which in particular implies Theorem \ref{thm:generalk}.

	\begin{customthm}{1C}\label{thm:main}
		For every $S_{\la} \in H^{\ast}_{T^n}(Gr(k,2n))$, where $\la \in 0^k1^{2n-k}$, and $\iota^{\ast}$ as in \S \ref{sec:setup}
		\[\iota^{\ast}(S_{\la})=\sum_{\nu \in (10)^{n-k}\{0,1\}^k} \left(\sum_{\mathbf{P}} \Big\{\fug(\mathbf{P}) \; | \; \mathbf{P} \text{ is a puzzle with boundary \halfuptri{\la}{\nu} }\Big\}\right)S_{\nu}\]
	\end{customthm}                                                                                                                                                                                  
	
	As explained in \S\ref{sec:setup}, it suffices to check Theorem \ref{thm:main}'s equality at each $T^n$-fixed point  $\sigma \in (10)^{n-k}\{0,1\}^k$ of $SpGr(k,2n)$. To do so, we first prove several preliminary results in the language of \S \ref{sec:TC}.

	\begin{lem}\label{lem:Kdelta} For $\omega=\omega_{SpGr}$ as in Proposition \ref{prop:ajsbwiring} and $\la \in 0^k1^{2n-k}$, we have $\halfuptri{\la}{\omega}=\delta_{\la,\omega\overline{\omega}}$.
	\end{lem}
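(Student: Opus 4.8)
We must show that the half-puzzle matrix entry $\halfuptri{\la}{\omega}$, where $\omega = \omega_{SpGr} = 0^k (10)^{n-k}$, equals the Kronecker delta $\delta_{\la, \omega\overline{\omega}}$. Here $\la \in 0^k 1^{2n-k}$ indexes a Schubert class on $Gr(k,2n)$, and the right-hand side picks out the unique $\la$ obtained by doubling $\omega$ (reflecting and swapping $0 \leftrightarrow 1$, then turning all $10$s into $1$s, as in the map $\wt\iota$ of \S\ref{sec:setup}). Concretely, $\omega\overline\omega = 0^k 1^{2n-k}$ when the $10$s are resolved, so the claim is that this particular $\la$ gives coefficient $1$ and every other $\la$ gives $0$.

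\textbf{Plan of attack.} The plan is to compute the linear map $\Phi$ associated to the half-puzzle directly on the distinguished ``all-blue'' boundary vector indexed by $\omega$ on the South side, using the explicit matrix entries from Definition \ref{def:RKU}. First I would observe that fixing $\nu = \omega = 0^k (10)^{n-k}$ on the South side forces a highly rigid tiling: the South side carries $k$ zeros followed by $(n-k)$ copies of $10$, and I would feed these labels upward through the trivalent $U$-vertices, wall-bounces ($K$), and crossings ($R$) to see which internal labelings are consistent with nonzero matrix entries. The key structural input is that, by the remark following Definition \ref{def:RKU}, the matrices $R_{CC}$ and $K_B$ are lower-triangular in the ordered alphabet $\{0,10,1\}$; this triangularity should propagate a uniqueness statement, forcing all the internal and boundary labels to be determined, leaving exactly one admissible half-puzzle when $\la = \omega\overline\omega$ and none otherwise. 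I would track the labels layer by layer: the $U$-matrix entries $(0,0,0)$, $(10,1,0)$, etc., dictate how a South label splits into its green/red outgoing pair, and then the $R$- and $K$-entries constrain the subsequent propagation.

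\textbf{The heart of the argument.} The computation amounts to showing the half-puzzle is \emph{frozen}: with $\omega$ on the South, there is a unique way to fill in all pieces, and its fugacity is $1$ (no nontrivial equivariant weights appear because no genuine ``$(1,0,0,1)$-type'' equivariant $R$-pieces or $K_B$ off-diagonal entries contribute). I expect the cleanest route is induction on the size $n$, peeling off either the bottom trivalent row or the Northwest/East corner, and checking that the recursion reproduces the doubling map $\wt\iota$ on the Northwest boundary. At each inductive step one verifies that the forced South-edge label ($0$ or $10$) uniquely determines the $U$-vertex used, which in turn fixes the green and red strands emanating from it; the red strands must then travel to the East wall and bounce (via $K_R$, whose only entries are $(1,0)$ and $(0,1)$), and the green strands must reach the Northwest boundary, assembling exactly the string $\omega\overline\omega$ read left to right.

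\textbf{Main obstacle.} The principal difficulty is the bookkeeping of the wall-bounces and the red-strand routing: the $K_R$ matrix swaps $0 \leftrightarrow 1$ on a bounce, so I must check carefully that the reflection introduced by the East wall is precisely the $0 \leftrightarrow 1$ swap built into $\overline\omega$, and that the $(n-k)$ copies of $10$ on the South side propagate consistently with the ``$10$s become $1$s'' clause of $\wt\iota$. A subtler point is ruling out spurious fillings that would contribute to $\la \neq \omega\overline\omega$ or produce a fugacity $\neq 1$: here the lower-triangularity of $R_{CC}$ and $K_B$ is essential, since it guarantees that any deviation from the frozen configuration either violates an admissibility constraint (zero entry) or forces a strictly larger label that cannot be absorbed, collapsing the sum to a single term. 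I would verify the base case $n=k$ (where $\omega = 0^k$ and the half-puzzle is entirely green, giving $\la = 0^k 1^k$) by hand, and trust that the triangularity makes the inductive step a finite, mechanical check rather than a genuine obstruction.
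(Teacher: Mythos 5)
Your overall strategy---computing the $(\la,\omega)$ matrix entry of $\Phi$ directly from the explicit entries in Definition~\ref{def:RKU}---is exactly the paper's primary route (the paper calls this a ``straightforward consequence'' and gives no details), so in spirit you are aligned. However, the mechanism you lean on for uniqueness is wrong. You invoke the lower-triangularity of $R_{CC}$ and $K_B$, but those matrices do not occur anywhere in the half-puzzle map $\Phi$: by the construction in \S\ref{sec:TC}, the bulk rhombi are green--red crossings $R_{RG}$, the East-wall bounces are $K_R$, and the bottom triangles are $U$. (The triangularity of $R_{CC}$ and $K_B$ is used for the wiring diagrams of Proposition~\ref{prop:ajsbwiring}, i.e.\ on the AJS/Billey side, not here.) Moreover the vertices that actually appear do branch: $U$ sends blue $0$ to either $(\text{green }0,\text{red }0)$ or $(\text{green }10,\text{red }1)$, and $R_{RG}$ sends $(\text{green }0,\text{red }0)$ to either itself or $(\text{green }10,\text{red }1)$, and can convert $(\text{green }10,\text{red }1)$ back to $(\text{green }0,\text{red }0)$. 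So the configuration is \emph{not} frozen vertex-by-vertex, and your claim that ``any deviation forces a strictly larger label that cannot be absorbed'' is not justified: $10$s can be both created and destroyed in the interior, so one must genuinely argue that no such excursion closes up into an admissible filling compatible with $\la\in\{0,1\}^{2n}$ on the Northwest side and $\omega$ on the South (e.g.\ via a conservation/positivity argument tailored to $R_{RG}$, $K_R$, $U$, using for instance that $K_R$ kills red $10$). As written, the heart of your argument is a gap. A smaller inaccuracy: in the base case $k=n$ the half-puzzle is not ``entirely green''---each $U$ vertex emits a red $0$ that must travel to the East wall and bounce to a green $1$, which is precisely how the $1^k$ half of $\omega\overline\omega=0^k1^k$ arises.

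You also miss the paper's cleaner alternative, which sidesteps all of this: a half-puzzle with boundary $(\la,\omega)$ is half of a classical self-dual triangular puzzle with Northwest, Northeast, South sides $\la$, $\overline{\la}$, $\omega\overline{\omega}=0^k1^{2n-k}$, and the delta-function statement for triangles with fully sorted South side is already \cite[Proposition 4]{KZJ}. If you want to keep the direct computation, replace the triangularity appeal with an honest analysis of the $R_{RG}$, $K_R$, $U$ weights; otherwise, reduce to the doubled puzzle and cite the known result.
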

	
	\begin{proof}
		This is a straightforward consequence of Definition \ref{def:RKU},
		when considering the $(\lambda,\omega)$ matrix entry of the product
		of $R$-, $K$-, and $U$-matrices making up the
		half-puzzle. Alternatively, note that this is half of a classical
		triangular self-dual puzzle with NW, NE, S boundaries labelled by
		$\la, \overline{\la}, \omega\overline{\omega}$, and so the result
		follows from \cite[Proposition 4]{KZJ}.
	\end{proof}

	\begin{prop}\label{prop:key} Given $\sigma \in (10)^{n-k}\{0,1\}^k$, fixing the Northwest and South boundaries to be strings of length $2n$ and $n$ respectively, one has
		\[
		\begin{tikzpicture}[baseline=5mm,math mode,scale=0.5]
		\draw[thick] (2,0) -- (0,0) -- (2,3.46);
		\draw[dashed,thick] (2,3.46) -- (2,0);
		\draw[thick] (0,-0.7) -- (2,-0.7); 
		\draw (0,0) -- (0,-0.7); 
		\draw[dashed] (2,0) -- (2,-0.7);
		\node at (1,-0.35) {\sigma};
		\end{tikzpicture}
		\quad=\quad
		\begin{tikzpicture}[baseline=5mm,math mode,scale=0.5]
		\draw[thick] (2,0) -- (0,0) -- (2,3.46);
		\draw[dashed,thick] (2,3.46) -- (2,0);
		\draw[thick] (150:0.75) -- ++(60:4);
		\draw (0,0) -- (150:0.75) (60:4) -- ++(150:0.75);
		\path (60:2) ++(150:0.35) node[rotate=60] {\wt\iota(\sigma)};
		\end{tikzpicture}
		\]
		
	\end{prop}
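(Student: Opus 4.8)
The plan is to read both sides as scattering-diagram maps and to prove their equality by transporting the attached AJS/Billey wiring through the half-puzzle $\Phi$, using only the local identities of Lemma~\ref{lem:TCP}. By Proposition~\ref{prop:ajsbwiring}, the left-hand side is $\Phi \circ M_C$, where $M_C$ is the type-$C$ scattering map $D(Q_{\wt\sigma})$ glued along the blue South edge, built from $R_{BB}$ at crossings and $K_B$ at the centerline bounces of a reduced word for $\wt\sigma$; the right-hand side is $M_A \circ \Phi$, where $M_A$ is the type-$A$ scattering map glued along the green Northwest edge, built from $R_{GG}$ at the crossings of a reduced word for the lift $\wt{\wt\iota(\sigma)} \in S_{2n}$. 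The first thing I would pin down is the folding $S_n \ltimes (\Z/2\Z)^n \hookrightarrow S_{2n}$ underlying $\wt\iota$ in \S\ref{sec:setup}: a reduced word for $\wt{\wt\iota(\sigma)}$ is obtained from one for $\wt\sigma$ by the substitution $\mathrm{fold}(s_j^{Sp_{2n}}) = s_j\, s_{2n-j}$ for $j<n$ and $\mathrm{fold}(s_n^{Sp_{2n}}) = s_n$, the doubling produced by mirroring the wiring diagram across the centerline and reading the centerline generator once.

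The identity $\Phi \circ M_C = M_A \circ \Phi$ then follows by a formal induction on $\ell(\wt\sigma)$ once we establish the single-generator push-through $\Phi \circ M_C(q) = M_A(\mathrm{fold}(q)) \circ \Phi$ for each simple reflection $q$, the base case being the empty word, where $\sigma = \omega_{SpGr}$, $\wt\iota(\sigma) = \omega_{Gr}$, and both sides are $\Phi$. For $q = s_j^{Sp_{2n}}$ with $j<n$, the single blue crossing enters $\Phi$ just below two adjacent South trivalent vertices, and the trivalent-swap identity (\ref{eqn:triv}) moves it past them, replacing $R_{BB}$ by $R_{GG} \otimes R_{RR}$ (the inner $R_{RG}$ appearing on both sides). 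Repeated use of Yang--Baxter (\ref{eqn:YBRG}) carries the green crossing up-left to Northwest positions $(j,j+1)$, exactly as in the type-$A$ transport of \cite{KZJ}. For $q = s_n^{Sp_{2n}}$, the blue bounce $K_B$ prepended to the rightmost trivalent vertex together with its East-wall reflection is, by the $K$-fusion identity (\ref{eqn:fusion}), equivalent to appending a single green crossing $R_{GG}(-2y_n)$ at the centerline above that corner — reusing $\Phi$'s own $U$ and $K_R$ — which is the factor $s_n$ at Northwest positions $(n,n+1)$.

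The genuinely new difficulty, and where I expect the main work to lie, is the centerline bookkeeping that has no type-$A$ analogue. After the trivalent swap the emitted red crossing must be carried up-right to the East wall and reflected by the reflection equation (\ref{eqn:reflection}), turning into a green crossing that Yang--Baxter then routes to the mirror Northwest positions $(2n-j,\, 2n+1-j)$; together with the green crossing at $(j,j+1)$ this realises the factor $s_j\,s_{2n-j}$. One must check that this routing is globally unobstructed, that the two green crossings land at precisely the mirror positions dictated by $\mathrm{fold}$, and that the spectral parameters track correctly — the blue strands carry $y_1,\dots,y_n$ and the bounce $s_n^{Sp_{2n}}$ sends $y_n \mapsto -y_n$, so the fused centerline crossing carries difference $2y_n$, matching $R_{GG}(-2y_n)$ in (\ref{eqn:fusion}) and the Northwest specialisation $y_1,\dots,y_n,-y_n,\dots,-y_1$. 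Making this rigorous is the planar-isotopy content of the reflection equation and of $K$-fusion, and it is the reason those two identities are isolated in Lemma~\ref{lem:TCP}. Once the proposition holds, taking the $(\la,\omega_{SpGr})$ matrix entry and invoking Lemma~\ref{lem:Kdelta} on the right-hand side reduces it to $S_{\la}|_{\wt\iota(\sigma)} = \sum_\nu \halfuptri{\la}{\nu}\, S_\nu|_\sigma$, which is Theorem~\ref{thm:main} at the fixed point $\sigma$.
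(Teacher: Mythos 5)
Your proposal matches the paper's proof in both structure and substance: the paper likewise reduces to a single simple reflection (your single-generator push-through, the base case being trivial) and transports it through the half-puzzle using exactly the identities you name in exactly the roles you assign them --- the trivalent swap (\ref{eqn:triv}) followed by Yang--Baxter (\ref{eqn:YBRG}) for $s_j$ with $j<n$, with the emitted red crossing carried to the East wall and converted by the reflection equation (\ref{eqn:reflection}) into the mirror green crossing, and $K$-fusion (\ref{eqn:fusion}) for $s_n$. The folding $s_j\mapsto s_j s_{2n-j}$, $s_n\mapsto s_n$ and the spectral-parameter bookkeeping you flag as the delicate points are precisely what the paper's illustrative $n=4$ diagrams verify.
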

	
	\begin{proof} It suffices to consider $\widetilde{\sigma}$ 
		(from Proposition \ref{prop:AJSB})
		a simple reflection. For the purposes of illustration, we set $n=4$ and 
		demonstrate the equality in the case of an $s_i$ where $i<n$, as well as 
		for $s_n$.
		\def\myscale{0.65}
		\[
		\begin{tikzpicture}[baseline=1.5cm,scale=\myscale,every node/.style={inner sep=.1em}]
		\foreach \i in {1,...,4} \draw[dg,arrow=1-0.4/\i] (\i+0.5,0.5) -- node[left=-1mm,pos=1-0.3/\i,black] {$\sss u_{\i}$}++(-\i/2,\i/2);
		\foreach \i in {1,...,4} \draw[dg,arrow=1-0.4/\i] (5,5-\i) -- node[left=-1mm,pos=1-0.3/\i,black] {$\sss -u_{\i}$}++(-\i/2,\i/2);
		\foreach \i in {1,...,4} \draw[dr,arrow=0.6*\i/(\i+0.2)] (\i+0.5,0.5) -- (5,5-\i);
		\draw[rounded corners,db,invarrow=0.3] (2.5,0.5) -- (2.5,0.25) -- (3.5,-0.25) -- node[right,pos=0.5,black] {$\sss u_2$} (3.5,-0.5);
		\draw[rounded corners,db,invarrow=0.3] (3.5,0.5) -- (3.5,0.25) -- (2.5,-0.25) -- node[left,pos=0.5,black] {$\sss u_3$}(2.5,-0.5);
		\draw[db,invarrow=0.3,invarrow=0.8] (1.5,0.5) -- node[left,pos=0.85,black] {$\sss u_1$}(1.5,-0.5) (4.5,0.5) -- node[right,pos=0.85,black] {$\sss u_4$}(4.5,-0.5);
		\draw[dashed,d] (5,-0.5) -- (5,4.75);
		\end{tikzpicture}
		\stackrel{(\ref{eqn:triv})}{=}\hspace{-0.5cm}
		\begin{tikzpicture}[baseline=1.5cm,scale=\myscale,every node/.style={inner sep=.1em}]
		\foreach \i in {1,4} \draw[dg,arrow=1-0.4/\i] (\i+0.5,0.5) -- node[left=-1mm,pos=1-0.3/\i,black] {$\sss u_{\i}$}++(-\i/2,\i/2);
		\draw[dg,rounded corners,arrow=0.85] (3.5,0.5) -- (2.5,0.75)-- (2,1) -- node[left=-1mm,pos=0.6,black]{$\sss u_2$}(1.5,1.5);
		\draw[dg,rounded corners,arrow=0.75] (2.5,0.5)-- (2.25,1) -- node[left=-1mm,pos=0.6,black]{$\sss u_3$}(2,2) ;
		\foreach \i in {1,...,4} \draw[dg,arrow=1-0.4/\i] (5,5-\i) -- node[left=-1mm,pos=1-0.3/\i,black] {$\sss -u_{\i}$}++(-\i/2,\i/2);
		\draw[dr,rounded corners,invarrow=0.4] (5,2) -- (4,1) -- (2.5,0.5);
		\draw[dr,rounded corners,invarrow=0.6] (5,3) -- (4,2) -- (3.5,1.25) -- (3.5,0.5);
		\foreach \i in {1,4} \draw[dr,arrow=0.6*\i/(\i+0.2)] (\i+0.5,0.5) -- (5,5-\i);
		\foreach \i in {1,4} \draw[db,invarrow=0.5] (\i+0.5,0.5) -- node[left,pos=0.85,black] {$\sss u_{\i}$}(\i+0.5,-0.5);
		\draw[db,invarrow=0.5] (2.5,0.5) -- node[left,pos=0.85,black] {$\sss u_3$}(2.5,-0.5);
		\draw[db,invarrow=0.5] (3.5,0.5) -- node[left,pos=0.85,black] {$\sss u_2$}(3.5,-0.5);
		\draw[dashed,d] (5,-0.5) -- (5,4.75);
		\end{tikzpicture} 
		\stackrel{(\ref{eqn:YBRG})}{=}\hspace{-0.5cm}
		\begin{tikzpicture}[baseline=1.5cm,scale=\myscale,every node/.style={inner sep=.1em}]
		\foreach \i in {1,4} \draw[dg,arrow=1-0.4/\i] (\i+0.5,0.5) -- node[left=-1mm,pos=1-0.3/\i,black] {$\sss u_{\i}$}++(-\i/2,\i/2);
		\draw[dg,rounded corners,arrow=0.9] (3.5,0.5) -- (2.5,1.4) -- node[below=0.5mm,pos=0.9,black]{$\sss u_2$}(1.5,1.5);
		\draw[dg,rounded corners,arrow=0.9] (2.5,0.5)-- (2,1.25) -- node[right=-1mm,pos=0.6,black]{$\sss u_3$}(2,2) ;
		\foreach \i in {1,...,4} \draw[dg,arrow=1-0.4/\i] (5,5-\i) -- node[left=-1mm,pos=1-0.3/\i,black] {$\sss -u_{\i}$}++(-\i/2,\i/2);
		\draw[dr,rounded corners,invarrow=0.475] (5,2) -- (4.5,1.8) -- (4,2) -- (2.5,0.5);
		\draw[dr,rounded corners,invarrow=0.675] (5,3) -- (4.5,2.5) -- (4.5,1.5) -- (4,1) -- (3.5,0.5);
		\foreach \i in {1,4} \draw[dr,arrow=0.6*\i/(\i+0.2)] (\i+0.5,0.5) -- (5,5-\i);
		\foreach \i in {1,4} \draw[db,invarrow=0.5] (\i+0.5,0.5) -- node[left,pos=0.85,black] {$\sss u_{\i}$}(\i+0.5,-0.5);
		\draw[db,invarrow=0.5] (2.5,0.5) -- node[left,pos=0.85,black] {$\sss u_3$}(2.5,-0.5);
		\draw[db,invarrow=0.5] (3.5,0.5) -- node[left,pos=0.85,black] {$\sss u_2$}(3.5,-0.5);
		\draw[dashed,d] (5,-0.5) -- (5,4.75);
		\end{tikzpicture}\stackrel{(\ref{eqn:reflection})}{=}\hspace{-0.5cm}
		\begin{tikzpicture}[baseline=1.5cm,scale=\myscale,every node/.style={inner sep=.1em}]
		\foreach \i in {1,4} \draw[dg,arrow=1-0.4/\i] (\i+0.5,0.5) -- node[left=-1mm,pos=1-0.3/\i,black] {$\sss u_{\i}$}++(-\i/2,\i/2);
		\draw[dg,rounded corners,arrow=0.9] (3.5,0.5) -- (2.5,1.4) -- node[below=0.5mm,pos=0.9,black]{$\sss u_2$}(1.5,1.5);
		\draw[dg,rounded corners,arrow=0.9] (2.5,0.5)-- (2,1.25) -- node[right=-1mm,pos=0.6,black]{$\sss u_3$}(2,2) ;
		\foreach \i in {1,4} \draw[dg,arrow=1-0.4/\i] (5,5-\i) -- node[left=-1mm,pos=1-0.3/\i,black] {$\sss -u_{\i}$}++(-\i/2,\i/2);
		\draw[dg,rounded corners, arrow=0.8] (5,3) -- (4.75,3.2) -- (4,3) -- node[left=-1mm,pos=0.5,black] {$\sss -u_3$}(3.5,3.5);
		\draw[dg,rounded corners, arrow=0.85] (5,2) -- (4.5,2.5) -- (4.25,2.7) -- (4.5,3.5) -- node[left=-1mm,pos=0.5,black] {$\sss -u_2$}(4,4);
		\foreach \i in {1,...,4} \draw[dr,arrow=0.6*\i/(\i+0.2)] (\i+0.5,0.5) -- (5,5-\i);
		\foreach \i in {1,4} \draw[db,invarrow=0.5] (\i+0.5,0.5) -- node[left,pos=0.85,black] {$\sss u_{\i}$}(\i+0.5,-0.5);
		\draw[db,invarrow=0.5] (2.5,0.5) -- node[left,pos=0.85,black] {$\sss u_3$}(2.5,-0.5);
		\draw[db,invarrow=0.5] (3.5,0.5) -- node[left,pos=0.85,black] {$\sss u_2$}(3.5,-0.5);
		\draw[dashed,d] (5,-0.5) -- (5,4.75);
		\end{tikzpicture}
		\stackrel{(\ref{eqn:YBRG})}{=}\hspace{-0.5cm}
		\begin{tikzpicture}[baseline=1.5cm,scale=\myscale,every node/.style={inner sep=.1em}]
		\foreach \i in {1,4} \draw[dg,arrow=1-0.4/\i] (\i+0.5,0.5) -- node[left=-1mm,pos=1-0.3/\i,black] {$\sss u_{\i}$}++(-\i/2,\i/2);
		\draw[dg,rounded corners,arrow=0.9] (3.5,0.5) -- (2.5,1.4) -- node[below=0.5mm,pos=0.9,black]{$\sss u_2$}(1.5,1.5);
		\draw[dg,rounded corners,arrow=0.9] (2.5,0.5)-- (2,1.25) -- node[right=-1mm,pos=0.6,black]{$\sss u_3$}(2,2) ;
		\foreach \i in {1,4} \draw[dg,arrow=1-0.4/\i] (5,5-\i) -- node[left=-1mm,pos=1-0.3/\i,black] {$\sss -u_{\i}$}++(-\i/2,\i/2);
		\draw[dg,rounded corners, arrow=0.6] (5,3) -- (4.5,3.5) -- (4,3.65) -- node[below=-1mm,pos=0.9,black] {$\sss -u_3$}(3.5,3.5);
		\draw[dg,rounded corners, arrow=0.7] (5,2) -- (4,3) -- (3.85,3.5) -- node[right=-1mm,pos=0.9,black] {$\sss -u_2$}(4,4);
		\foreach \i in {1,...,4} \draw[dr,arrow=0.6*\i/(\i+0.2)] (\i+0.5,0.5) -- (5,5-\i);
		\foreach \i in {1,4} \draw[db,invarrow=0.5] (\i+0.5,0.5) -- node[left,pos=0.85,black] {$\sss u_{\i}$}(\i+0.5,-0.5);
		\draw[db,invarrow=0.5] (2.5,0.5) -- node[left,pos=0.85,black] {$\sss u_3$}(2.5,-0.5);
		\draw[db,invarrow=0.5] (3.5,0.5) -- node[left,pos=0.85,black] {$\sss u_2$}(3.5,-0.5);
		\draw[dashed,d] (5,-0.5) -- (5,4.75);
		\end{tikzpicture}	
		\]
		\[
		\begin{tikzpicture}[baseline=1.5cm,scale=\myscale,every node/.style={inner sep=.1em}]
		\foreach \i in {1,...,4} \draw[dg,arrow=1-0.4/\i] (\i+0.5,0.5) -- node[left=-1mm,pos=1-0.3/\i,black] {$\sss u_{\i}$}++(-\i/2,\i/2);
		\foreach \i in {1,...,4} \draw[dg,arrow=1-0.4/\i] (5,5-\i) -- node[left=-1mm,pos=1-0.3/\i,black] {$\sss -u_{\i}$}++(-\i/2,\i/2);
		\foreach \i in {1,...,4} \draw[dr,arrow=0.6*\i/(\i+0.2)] (\i+0.5,0.5) -- (5,5-\i);
		\foreach \i in {1,...,3} \draw[db,invarrow=0.5] (\i+0.5,0.5) -- node[left,pos=0.85,black] {$\sss u_{\i}$}(\i+0.5,-0.5);
		\draw[db,rounded corners,invarrow=0.6] (4.5,0.5) -- (4.5,0.3)-- node[left=-1mm,pos=0.1,black] {$\sss u_4$} ++ (0.5,-0.3);
		\draw[db,invarrow=0.6] (5,0)-- node[left=0mm,pos=0.55,black] {$\sss -u_4$} (4.5,-0.5);
		\draw[dashed,d] (5,-0.5) -- (5,4.75);
		\end{tikzpicture}
		\stackrel{(\ref{eqn:fusion})}{=}\hspace{-0.5cm}
		\begin{tikzpicture}[baseline=1.5cm,scale=\myscale,every node/.style={inner sep=.1em}]
		\foreach \i in {1,...,3} \draw[dg,arrow=1-0.4/\i] (\i+0.5,0.5) -- node[left=-1mm,pos=1-0.3/\i,black] {$\sss u_{\i}$}++(-\i/2,\i/2);
		\foreach \i in {1,...,3} \draw[dg,arrow=1-0.4/\i] (5,5-\i) -- node[left=-1mm,pos=1-0.3/\i,black] {$\sss -u_{\i}$}++(-\i/2,\i/2);
		\draw[dg,rounded corners, arrow=0.9] (4.5,0.5) -- (4.25,0.7) -- (4.5,1.2) -- (4.5,1.5)-- node[left,pos=0.8,black] {$\sss -u_4$}(3,3);
		\draw[dg,rounded corners, arrow=0.9] (5,1) -- (4.75,1.2) -- (4.25,1) -- (4,1) --  node[left,pos=0.8,black] {$\sss u_4$}(2.5,2.5);
		\foreach \i in {1,...,4} \draw[dr,arrow=0.6*\i/(\i+0.2)] (\i+0.5,0.5) -- (5,5-\i);
		\foreach \i in {1,...,3} \draw[db,invarrow=0.5] (\i+0.5,0.5) -- node[left,pos=0.85,black] {$\sss u_{\i}$}(\i+0.5,-0.5);
		\draw[db,invarrow=0.5] (4.5,0.5) -- node[left,pos=0.85,black] {$\sss -u_4$}(4.5,-0.5);
		\draw[dashed,d] (5,-0.5) -- (5,4.75);
		\end{tikzpicture}
		\stackrel{(\ref{eqn:YBRG})}{=}\hspace{-0.5cm}
		\begin{tikzpicture}[baseline=1.5cm,scale=\myscale,every node/.style={inner sep=.1em}]
		\foreach \i in {1,...,3} \draw[dg,arrow=1-0.4/\i] (\i+0.5,0.5) -- node[left=-1mm,pos=1-0.3/\i,black] {$\sss u_{\i}$}++(-\i/2,\i/2);
		\foreach \i in {1,...,3} \draw[dg,arrow=1-0.4/\i] (5,5-\i) -- node[left=-1mm,pos=1-0.3/\i,black] {$\sss -u_{\i}$}++(-\i/2,\i/2);
		\draw[dg,rounded corners, arrow=0.9] (4.5,0.5) -- (3,2) -- (3,2.25) -- (3.1,2.75)-- node[right=-1mm,pos=0.8,black] {$\sss -u_4$}(3,3);
		\draw[dg,rounded corners, arrow=0.9] (5,1) -- (3.5,2.5) -- (3.2,2.5) -- (2.65,2.4) --  node[below,pos=0.8,black] {$\sss u_4$}(2.5,2.5);
		\foreach \i in {1,...,4} \draw[dr,arrow=0.6*\i/(\i+0.2)] (\i+0.5,0.5) -- (5,5-\i);
		\foreach \i in {1,...,3} \draw[db,invarrow=0.5] (\i+0.5,0.5) -- node[left,pos=0.85,black] {$\sss u_{\i}$}(\i+0.5,-0.5);
		\draw[db,invarrow=0.5] (4.5,0.5) -- node[left,pos=0.85,black] {$\sss -u_4$}(4.5,-0.5);
		\draw[dashed,d] (5,-0.5) -- (5,4.75);
		\end{tikzpicture}
		\]
	\end{proof}

	\begin{lem}\label{lem:dab} \begin{enumerate}[a),leftmargin=0.2in]
			\item \cite[Proposition 4]{KZJ} Type $A$. Let $\sigma \in 0^k1^{2n-k}$ and $\la$ be a string of length $2n$: \\ If \;
			$\begin{tikzpicture}[baseline=-5mm, math mode]
			\draw[thick] (0,-0.6) -- node[pos=0.5] {\omega_{Gr}} (2,-0.6); 
			\draw (0,0) -- (0,-0.6); \draw[dashed] (2,0) -- (2,-0.6); 
			\draw (0,0) -- node[pos=0.5] {\la} (2,0); \node at (1,-0.3) {\sigma};
			\end{tikzpicture}\neq 0$
			for $\omega_{Gr}$ as in Proposition \ref{prop:ajsbwiring}, then $\la$ consists only of $0$s and $1$s (no $10$s).
			\item Type $C$. Let $\sigma \in (10)^{n-k}\{0,1\}^k$ and $\la$ be a string of length $n$: If \;
			$\begin{tikzpicture}[baseline=-5mm, math mode]
			\draw[thick] (0,-0.6) -- node[pos=0.5] {\omega_{SpGr}} (2,-0.6); 
			\draw (0,0) -- (0,-0.6); \draw[dashed] (2,0) -- (2,-0.6); 
			\draw (0,0) -- node[pos=0.5] {\la} (2,0); \node at (1,-0.3) {\sigma};
			\end{tikzpicture}\neq 0$
			for $\omega_{SpGr}$ as in Proposition \ref{prop:ajsbwiring}, then $\la$ has the same number of $10$s as $\omega_{SpGr}$.
		\end{enumerate}
	\end{lem}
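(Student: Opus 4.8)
The plan is to exploit a conservation law: the total number of $10$-labels is preserved by every building block of the relevant scattering-diagram maps, so it is preserved by the full product, and matching this count on the two boundaries forces both conclusions at once.

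Concretely, I would grade each of $\C^3_G,\C^3_R,\C^3_B$ by declaring the basis vector indexed by $10$ to have degree $1$ and those indexed by $0$ and $1$ to have degree $0$; this induces a grading on all tensor powers by ``total number of $10$s.'' The first step is to check that the matrices of Definition \ref{def:RKU} entering here---namely $R_{CC}$ for crossings (with $C=G$ in type $A$ and $C=B$ in type $C$) and $K_B$ for the type-$C$ bounces---are homogeneous of degree $0$. For $R_{CC}$ this is immediate from the list in Definition \ref{def:RKU}: the identity entries preserve labels, and each of the three off-diagonal entries $(1,0,0,1)$, $(10,0,0,10)$, $(1,10,10,1)$ relates input and output pairs carrying equal numbers of $10$s (zero, one, and one respectively). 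For $K_B$ the diagonal entries preserve the label while the single off-diagonal entry sends $0\mapsto 1$, and neither alters the $10$-count. Note that no $U$-matrix occurs, since by Proposition \ref{prop:ajsbwiring} these are wiring-diagram scattering maps built only from $R_{BB}=R_{GG}$ and $K_B$.

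The second step is then routine: the scattering map attached to any reduced word for $\widetilde{\sigma}$ is a composition of degree-$0$ operators, hence itself degree-$0$, so its matrix in the graded basis is block-diagonal and the $(\la,\omega)$ entry vanishes unless $\la$ and $\omega$ carry the same number of $10$s. For part a) we have $\omega=\omega_{Gr}=0^k1^{2n-k}$ with no $10$s, forcing $\la$ to have none---recovering \cite[Proposition 4]{KZJ}; for part b) we have $\omega=\omega_{SpGr}=0^k(10)^{n-k}$ with exactly $n-k$ copies of $10$, so $\la$ must carry $n-k$ copies as well.

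There is no genuine obstacle beyond the finite bookkeeping of checking homogeneity against Definition \ref{def:RKU}. The only point meriting care is the type-$C$ ingredient absent from the type-$A$ argument of \cite{KZJ}, namely the bounce matrix $K_B$: one must confirm that it respects the grading, which it does precisely because its nonzero entries are the diagonal and the map $0\mapsto 1$, neither of which creates or destroys a $10$.
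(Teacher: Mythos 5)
Your proof is correct and is essentially the argument the paper gives: its proof of part b) observes directly from Definition \ref{def:RKU} that $R_{BB}$ and $K_B$ preserve the number of $10$s in a string, hence so does any composition of them, and then matches the $10$-count of $\la$ against that of $\omega_{SpGr}$. The only difference is cosmetic --- you phrase the conservation law as a grading and re-derive part a) from the same principle, whereas the paper simply cites \cite[Proposition 4]{KZJ} for that part.
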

	
	\begin{proof} 
		To prove part b), recall that
		$\begin{tikzpicture}[baseline=-6mm, math mode] \draw[thick] (0,-0.6)
		-- node[pos=0.5] {\omega_{SpGr}} (2,-0.6); \draw (0,0) -- (0,-0.6);
		\draw (2,0)[dashed] -- (2,-0.6); \draw (0,0) -- node[pos=0.5]
		{\la} (2,0); \node at (1,-0.3) {\sigma};
		\end{tikzpicture}$
		is the $(\lambda,\omega_{SpGr})$ matrix entry for the composition of
		$R_{BB}$ and $K_B$ maps. From Definition \ref{def:RKU}, we see that both of these maps preserve the number of $10$s in a string, 
		hence so will compositions of these maps.
	\end{proof}

	\begin{proof}[Proof of Theorem \ref{thm:main}] In $H^{\ast}_T(\text{pt})$, we have the following equality
		\[
		\begin{tikzpicture}[baseline=5mm,math mode,scale=0.75]
		\draw[thick] (150:0.8) -- ++(60:4) node[pos=0.5] {\la\ };
		\draw[thick] (150:0) -- ++(60:4) node[right=-2.5mm,pos=0.5] {\omega_{Gr} \ };
		\draw (0,0) -- (150:0.8) (60:4) -- ++(150:0.8);
		\path (60:2) ++(150:0.4) node[rotate=60] {\wt\iota(\sigma)};
		\end{tikzpicture}\hspace{-0.1cm}
		\stackrel{(\text{L}\ref{lem:Kdelta})}{=}
		\sum_{\mu}
		\begin{tikzpicture}[baseline=5mm,math mode,scale=0.75]
		\draw[thick] (2,0) -- node {\omega_{SpGr}} (0,0) -- (2,3.46);
		\draw[dashed,thick] (2,3.46) -- (2,0);
		\draw[thick] (150:0.8) -- ++(60:4) node[pos=0.5] {\la\ };
		\draw[thick] (150:0) -- ++(60:4) node[pos=0.5] {\mu};
		\draw (0,0) -- (150:0.8) (60:4) -- ++(150:0.8);
		\path (60:2) ++(150:0.4) node[rotate=60] {\wt\iota(\sigma)};
		\end{tikzpicture}
		\stackrel{(\text{L}\ref{lem:dab}a)}{=}
		\begin{tikzpicture}[baseline=5mm,math mode,scale=0.75]
		\draw[thick] (2,0) -- node {\omega_{SpGr}} (0,0) -- (2,3.46);
		\draw[dashed,thick] (2,3.46) -- (2,0);
		\draw[thick] (150:0.8) -- ++(60:4) node[pos=0.5] {\la\ };
		\draw (0,0) -- (150:0.8) (60:4) -- ++(150:0.8);
		\path (60:2) ++(150:0.4) node[rotate=60] {\wt\iota(\sigma)};
		\end{tikzpicture}\;
		\stackrel{(\text{P}\ref{prop:key})}{=}
		\begin{tikzpicture}[baseline=5mm,math mode,scale=0.75]
		\draw[thick] (2,0)  -- (0,0) -- node {\lambda\ } (2,3.46);
		\draw[dashed,thick] (2,0) -- (2,3.46);
		\draw[thick] (0,-0.7) -- node {\omega_{SpGr}} (2,-0.7); 
		\draw (0,0) -- (0,-0.7); 
		\draw[dashed] (2,0) -- (2,-0.7);
		\node at (1,-0.35) {\sigma};
		\end{tikzpicture}
		\stackrel{(\text{L}\ref{lem:dab}b)}{=}
		\sum_{\nu}
		\begin{tikzpicture}[baseline=5mm,math mode,scale=0.75]
		\draw[thick] (2,0)  -- node[pos=0.5] {\nu} (0,0) -- node {\lambda\ } (2,3.46);
		\draw[dashed,thick] (2,0) -- (2,3.46);
		\draw[thick] (0,-0.7) -- node {\omega_{SpGr}} (2,-0.7); 
		\draw (0,0) -- (0,-0.7); 
		\draw[dashed] (2,0) -- (2,-0.7);
		\node at (1,-0.35) {\sigma};
		\end{tikzpicture}
		\]
		The left side corresponds to $\iota^{\ast}(S_{\la})|_{\sigma}$ by Proposition \ref{prop:ajsbwiring}. In the second and fourth equality, the strings $\mu$ and $\nu$ have content $0^k1^{2n-k}$ and $(10)^{n-k}\left\{0,1\right\}^k$ respectively, and all other terms of the sum vanish. 
	\end{proof}

	\noindent{\bf Acknowledgements.}
	We thank \.Izzet Co\c skun and Alex Yong for references,
	and Michael Wheeler for discussions about $P$- and $Q$-Schur functions.


\begin{thebibliography}{}	
	
	\bibitem{AJS} H. H. Andersen, J. C. Jantzen and W. Soergel, \textit{Representations of quantum groups at a {$p$}th root of unity and of semisimple groups in characteristic {$p$}: independence of {$p$}}, Ast\'{e}risque \textbf{220} (1994), 321 p.

	\bibitem{Billey} S. C. Billey, \textit{Kostant polynomials and the cohomology ring for {$G/B$}}, Duke Math. J. \textbf{96} (1999), no. 1, 205--224.
	
	\bibitem{BKPT} A. S. Buch, A. Kresch, K. Purbhoo and H. Tamvakis, \textit{The puzzle conjecture for the cohomology of two-step flag	manifolds}, J. Algebraic Combin. \textbf{44} (2016), no. 4, 973--1007.

	\bibitem{CoskunSp} \.I. Co\c skun, \textit{Symplectic restriction varieties and geometric branching rules {II}}, J. Combin. Theory Ser. A \textbf{125} (2014), 47--97.
	
	\bibitem{Kirwan} F. C. Kirwan, \textit{Cohomology of quotients in symplectic and algebraic geometry}, Mathematical Notes \textbf{31}, Princeton University Press, Princeton, NJ (1984), i+211p.
	
	\bibitem{KT03} A. Knutson and T. Tao, \textit{Puzzles and (equivariant) cohomology of {G}rassmannians}, Duke Math. J. \textbf{119} (2003), no. 2, 221--260.
	
	\bibitem{KZJ} A. Knutson and P. Zinn-Justin, \textit{Schubert puzzles and integrability {I}: invariant trilinear forms}, (2017), arxiv:1706.10019.
	
	
	\bibitem{Pragacz} P. Pragacz, \textit{Addendum: `{A} generalization of the {M}acdonald--{Y}ou
		formula' [{J}. {A}lgebra {\bf 204} (1998), no. 2, 573--587]}, J. Algebra \textbf{226} (2000), no. 1, 639--648.
	
	\bibitem{PurbhooSottile} K. Purbhoo and F. Sottile, \textit{A {L}ittlewood--{R}ichardson rule for {G}rassmannian permutations}, Proc. Amer. Math. Soc. \textbf{137} (2009), no. 6, 1875--1882.
	
	\bibitem{Stembridge} J. R. Stembridge, \textit{Shifted tableaux and the projective representations of symmetric groups}, Adv. Math. \textbf{74} (1989), no. 1, 87--134.
	
	\bibitem{ZJ} P. Zinn-Justin, \textit{{L}ittlewood--{R}ichardson coefficients and integrable tilings}, Electron. J. Combin. \textbf{16} (2009), Research Paper 12.
	

\end{thebibliography}
\end{document}